 \newtheorem{thm}{Theorem}[section]
 \newtheorem{cor}[thm]{Corollary}
 \newtheorem{lem}[thm]{Lemma}
 \newtheorem{prop}[thm]{Proposition}
 \theoremstyle{definition}
 \newtheorem{defn}[thm]{Definition}
 \theoremstyle{remark}
 \newtheorem{rem}[thm]{Remark}
 \numberwithin{equation}{section}
\begin{document}
%
%
%
%
%
%
%
%
%

\title[A study on Dunford-Pettis completely continuous  like operators]
 {A study on  Dunford-Pettis completely\\
 continuous  like operators\\}
\author {M. Alikhani}
\address{Department of Mathematics, University of Isfahan}
\email{ m.alikhani@sci.ui.ac.ir and m2020alikhani@yahoo.com}

\subjclass{46B20, 46B25,46B28.}
\keywords{Dunford-Pettis relatively compact property; Dunford-Pettis completely continuous operators.}
\date{January 1, 2004}
\begin{abstract} In this article, the class  of all
Dunford-Pettis $ p $-convergent operators  and
$ p $-Dunford-Pettis relatively compact property  on Banach spaces are investigated.\ Moreover,
we give some conditions on Banach spaces $ X $ and $ Y $ such that the class of bounded linear operators from $  X$ to $ Y $ and some its subspaces have the $ p $-Dunford-Pettis relatively compact property.\ In addition,
if $ \Omega $ is a compact Hausdorff space, then we prove that dominated operators from the  space of all continuous
functions from $ K $ to Banach space $ X $ (in short $ C(\Omega,X) $) taking values in a Banach space with the $ p $-$ (DPrcP) $ are $ p $-convergent when $ X $ has the Dunford-Pettis property of order $  p.$\ Furthermore,
we show that
 if $  T:C(\Omega,X)\rightarrow Y $ is a strongly bounded operator with representing measure $ m:\Sigma\rightarrow L(X,Y) $ and  $ \hat{T}:B(\Omega,X)\rightarrow Y $ is its extension,
then  $ T$ is Dunford-Pettis $ p $-convergent if and only if  $ \hat{T}$ is Dunford-Pettis $ p $-convergent.\
\end{abstract}
\maketitle
\section{ Introduction:}

A bounded subset $K$ of a Banach space $  X$ is called Dunford-Pettis set, if any weakly null sequence $ (x_{n}) $ in $ X^{\ast} $ converges uniformly on $ K$ \cite{An}.\
It is  easy to verify  that the class of  Dunford-Pettis sets  strictly contains  the class of relatively compact sets.\ But in general the converse is not true.\
The concept of  Dunford-Pettis relatively compact property (briefly denoted by $(DPrcP) $) on Banach spaces was introduced by Emmanuele \cite{e1}.\ A Banach space $ X $ has the $(DPrcP), $ if every Dunford-Pettis subset of $ X$ is relatively compact.\ It is well known that  any
 dominated operator from $ C(\Omega, X) $  spaces taking values in
a Banach space with the $ (DPrcP) $ is completely continuous when  $ X $ has the Dunford-Pettis property (see \cite{e1}).\\
Wen and Chen \cite{w},  introduced the definition of a Dunford-Pettis completely continuous operator in order to
characterize the $ (DPrcP) $ on Banach spaces.\
A bounded linear operator $ T $  from a Banach space $  X$ to a Banach space $ Y $ is called Dunford-Pettis completely continuous, if it transforms  weakly null sequences which are Dunford-Pettis sets in $ X $ to norm-null sequences in $ Y. $\\
For more details on the rule of the $ (DPrcP) $ in different areas of Banach space theory, one can refer to \cite{An,g9,w}.\ Let us recall from  \cite{djt}, that a sequence $ (x_{n})_{n} $ in $ X $ is  called weakly $ p $-summable, if $ (x^{\ast}(x_{n}))_{n} \in \ell_{p}$ for each $ x^{\ast}\in X^{\ast}.$\ The set of all weakly $ p $-summable sequences in $ X $ is denoted  by $ \ell_{p} ^{w}(X).$\ The weakly $ \infty $-summable sequences are precisely the weakly null sequences.\
A sequence $(x_{n})_{n}$ in $ X $ is said to be weakly $ p $-convergent to $ x\in X$ if $ (x_{n} - x)_{n}\in \ell_{p} ^{w}(X).$ \
Recently,
the concepts of Dunford-Pettis $ p$-convergent operators  between Banach spaces and $ p $-Dunford-Pettis relatively compact property  on Banach spaces was introduced by Ghenciu \cite{g11} as follows:\
\begin{itemize}
\item  An  operator $T:X\rightarrow Y $ is called Dunford-Pettis $  p$-convergent, if it takes weakly $ p $-summable sequences which are  Dunford-Pettis sets into norm null sequences.\ The class of  Dunford-Pettis $ p $-convergent  operators  from  $ X $ into $ Y $ is denoted by $ DPC_{p}(X,Y). $\
\item  A Banach space $ X$ has the $ p $-Dunford-Pettis relatively compact property (briefly
denoted by $ p $-$ (DPrcP) $), whenever every  Dunford-Pettis and weakly $ p $-summable sequence $ (x_{n})_{n} $ in $ X $ is norm null.\
\end{itemize}
Motivated by the above works, the following interesting questions are posed in this area:\\
\begin{itemize}
\item Under which conditions on Banach spaces $ X$ and $ Y, $ every dominated
operator $ T : C(\Omega,X) \rightarrow Y $ is $ p $-convergent?\
\item Suppose that $ T:C(\Omega,X)\rightarrow Y $ is a strongly bounded operator with representing measure $ m:\Sigma\rightarrow L(X,Y) $ and $ \hat{T}:B(\Omega,X)\rightarrow Y $ is the restriction of $T^{**}$ to $B(\Omega, X),$\ then is $T$ Dunford-Pettis $ p$-convergent if and only if $\hat{T}$ is Dunford-Pettis $ p$-convergent?\
 \end{itemize}
These kind of researches have done by many authors for different operators, see
\cite{bc,bb,cs,c,g10,g22}.\\ Here, we try answer to the above questions.\
The article is organized as follows:\\
Section 2 of this paper provides a wide range of  definitions and concepts in Banach spaces.\ These concepts are mostly
well known, but we need them in the sequel.\\
Section 3 provides the background knowledge of  Dunford-Pettis $ p $-convergent operators
and the $ p $-$ (DPrcP) $ on Banach spaces.\
Here, we investigate some characterizations
of $ p $-$ (DPrcP) $ on Banach spaces.\  In addition,  we prove that if $ (X_{n}) _{n}$
a sequence
of Banach spaces with the $ p $-$ (DPrcP) ,$ then  the space $ (\sum_{n=1}^{\infty}\bigoplus X_{n})_{\ell_{1}} $
has the  same property.\
In the sequel,
we show that the space bounded linear operators and some certain
subspaces of this space have the $ p $-$ (DPrcP) $ under suitable conditions.\ Moreover, if $ E $ is a Banach lattice, then we give some sufficient conditions for which each  Dunford-Pettis $ p $-convergent
operator $ T:E\rightarrow X $ has an adjoint which is Dunford-Pettis $ p $-convergent.\ Finally, we state that any dominated operator from
$ C(\Omega, X) $ spaces taking values in a Banach space with the $ p $-$ (DPrcP) $ is $p  $-convergent,  when $ X $
has the Dunford-Pettis property of order $ p .$\\
In Section 4, we show that if $  T:C(\Omega,X)\rightarrow Y $ is  strongly bounded, then
$  T $ is Dunford-Pettis $ p $-convergent if and only if  $ \hat{T} $ is Dunford-Pettis $ p $-convergent.\ Moreover, we prove that  $ T ^{\ast}$ is Dunford-Pettis $ p $-convergent if and only if  $ \hat{T} ^{\ast}$ is Dunford-Pettis $ p $-convergent.\ Furthermore, if $ \Omega $ is a dispersed compact Hausdorff space and $  T:C(\Omega,X)\rightarrow Y $ is  strongly bounded, then we show that $ T^{\ast} $  is Dunford-Pettis $p$-convergent  if and only if  $ m(A)^{\ast}:Y^{\ast}\rightarrow X^{\ast} $ is Dunford-Pettis $ p $-convergent, for each $ A \in \Sigma .$\
Note that our results in this section are motivated by results in \cite{bc,cs,g10,g22}.\
\section{ Definitions and Notations:}
 Throughout this article
  $ X,Y $ and $  Z$ are arbitrary Banach spaces,
 $ 1\leq p < \infty ,$   except for the cases where we consider other assumptions.\ Also, we suppose $p^{\ast}$ is the H$\ddot{\mathrm{o}}$lder conjugate of $p;$ if $ p=1,~~ \ell_{p^{\ast}} $
 plays the role of $ c_{0} .$\ The unit coordinate vector in $ \ell_{p} $ (resp.\ $ c_{0} $ or  $\ell_{\infty} $) is denoted by $ e_{n}^{p} $ (resp.\ $ e_{n} $).\ We
denote the closed unit ball of $  X$ by $ B_{X} $ and the identity map on $  X$  by
$ id_{X} .$\
The space $ X $ embeds in $ Y, $ if $ X $ is isomorphic to a closed subspace of $Y$ (in short we denote $ X\hookrightarrow Y $).\ We denote two isometrically isomorphic spaces $ X $ and $ Y $ by
$ X\cong Y.$\  Also we use $ \langle  x,x^{\ast}\rangle $ or $ x^{\ast}(x) $
for the duality between $ x\in X $ and $ x^{\ast}\in X^{\ast}. $\  The space of all bounded $ \Sigma $-measurable functions on $ \Omega $ with separable range in $ X $  is denoted by $ B(\Omega,X). $\
We  denote
the unit ball of $ C(\Omega,X)$ and the unit
ball of $ B(\Omega, X) $ by $  B_{0} $ and $ B, $ respectively.\
For a bounded linear operator $ T : X \rightarrow Y, $ the adjoint of the operator $ T $
is denoted by $ T^{\ast}. $\  The
space of all bounded linear operators (resp. compact operators) from $ X $ into $  Y$ is
denoted by $ L(X,Y ) $ (resp. $ K(X, Y ) $) and the topological dual of $ X $ is denoted by
$ X^{\ast}. $\ The space of all $ w^{\ast} $-$ w$ continuous  operators
from $ X^{\ast} $  to  $ Y $ will denoted by $L_{w^{\ast}}(X^{\ast},Y)  .$\ The projective
tensor product of two Banach spaces $ X $ and$ Y $ will be denoted by $ X\widehat{\bigotimes}_{\pi}Y. $\

In what follows we introduce some notions  which will be used in the sequel.\
A bounded linear operator $ T $  from a Banach space $  X$ to a Banach space $ Y $ is called  completely continuous, if it transforms  weakly null sequences  in $ X $ to norm-null sequences in $ Y $ \cite{di1}.\ The class  of all completely continuous operators from
$ X $ to $  Y$  is denoted by $ CC(X,Y) .$\ A Banach space $ X $ is said to have the
Dunford-Pettis property  (in short $X $ has the $ (DPP)$), if for any Banach space $ Y $ every weakly compact
operator $ T:X\rightarrow Y $ is completely continuous \cite{di1}.\
A bounded subset $ K$ of $ X $ is relatively weakly $  p$-compact (resp. weakly $  p$-compact) if every sequence in $ K $ has a weakly $p $-convergent subsequence
with limit in $ X $ (resp. in $ K $) \cite{cs}.\
A bounded linear operator $ T : X \rightarrow  Y  $ is called weakly $ p $-compact, if $ T (B_{X}) $ is a relatively weakly $ p $-compact set in $ Y.$ \
 A sequence $(x_{n})_{n}$ in $ X $  is called weakly $  p$-Cauchy if   $ (x_{n_{k}}-x_{m_{k}})_{k} $
is weakly $ p $-summable for any increasing sequences $ (n_{k})_{k} $ and $ (m_{k})_{k} $ of positive
integers \cite{ccl}.\
A bounded linear operator $ T :X\rightarrow Y$  is called $ p $-convergent, if it takes weakly $ p $-summable sequences  in $ X $ into norm null sequences in $ Y $ \cite{cs}.\ We denote the class  of $ p $-convergent operators from $ X  $ into $  Y$  by $ C_{p}(X,Y) .$\ A Banach space $ X $ has the $ p $-Schur property ($ X\in(C_{p}) $), if $ id_{X}\in C_{p} (X,X).$\
We refer the reader for undefined terminologies to the
classical references \cite{AlbKal, du, di1}.\
\section{ Dunford-Pettis relatively compact property of order $ p $}
In this section, we obtain some characterizations of
a Banach space with the $ p $-$ (DPrcP ). $\ Moreover the stability of  $ p $-$ (DPrcP )$ for
some subspaces of bounded linear operators is investigated.\\ Our aim in this section is to obtain some suitable conditions on  $ X $ and
$  Y$ such that any  dominated operator $ T $ from
$ C(\Omega,X) $ into $ Y $ is $  p$-convergent.\\
Recall that, the authors in \cite{ce1,g8} by using Right topology, independently, proved  that a sequence  $ (x_{n})_{n} $  in
a Banach space $ X $ is Right null if and only if it is Dunford-Pettis and weakly
null.\ Also, they showed that a sequence $ (x_{n})_{n} $ in a Banach space $ X $ is Right
Cauchy if and only if it is Dunford-Pettis and weakly Cauchy.\\
 Inspired by the above works, for convenience,
we apply the notions  $  p$-Right null and $ p $-Right Cauchy sequences instead of weakly $ p $-summable and weakly $ p $-Cauchy sequences which are Dunford-Pettis sets, respectively.\

\begin{prop}\label{p1}\rm  Let $ T : X \rightarrow Y $  be a bounded linear operator.\ The following statements  are equivalent:\\
$ \rm{(i)} $ $T$ maps Dunford-Pettis  and weakly $ p $-compact subset of $ X $  onto relatively norm compact in $ Y. $\\
$ \rm{(ii)} $ $ T $ maps $ p $-Right null  sequences onto norm null sequences,\\
$ \rm{(iii)} $ $  T$ maps  $ p $-Right Cauchy sequences onto norm convergent sequences.
\end{prop}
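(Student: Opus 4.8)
The plan is to establish the cyclic chain $\mathrm{(i)} \Rightarrow \mathrm{(ii)} \Rightarrow \mathrm{(iii)} \Rightarrow \mathrm{(i)}$. Throughout I would rely on three elementary stability facts whose verification is routine from the definitions. First, subsets, translates $K - x$, and difference sets $K - K$ of a Dunford-Pettis set $K$ are again Dunford-Pettis sets; this follows by testing against a weakly null sequence $(x_{n}^{\ast}) \subseteq X^{\ast}$ and using $\sup_{a,b \in K}|x_{n}^{\ast}(a-b)| \le \sup_{a}|x_{n}^{\ast}(a)| + \sup_{b}|x_{n}^{\ast}(b)|$. Second, a weakly $p$-summable sequence is weakly $p$-convergent to $0$, any subsequence of a weakly $p$-summable sequence is weakly $p$-summable, and a weakly $p$-convergent sequence is weakly $p$-Cauchy. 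Third, in the complete space $Y$ a norm-Cauchy sequence converges, while a weakly null sequence lying in a relatively norm compact set must be norm null.

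For $\mathrm{(i)} \Rightarrow \mathrm{(ii)}$, let $(x_{n})_{n}$ be a $p$-Right null sequence, that is, weakly $p$-summable and Dunford-Pettis. I would form the set $K = \{x_{n} : n\} \cup \{0\}$, which is Dunford-Pettis and, because every sequence drawn from it admits either a constant subsequence or a subsequence of $(x_{n})_{n}$ (hence weakly $p$-convergent to $0 \in K$), is weakly $p$-compact. By $\mathrm{(i)}$, $T(K)$ is relatively norm compact; since $(x_{n})_{n}$ is weakly null, $(Tx_{n})_{n}$ is weakly null and lies inside a relatively norm compact set, which forces $Tx_{n} \to 0$ in norm.

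For $\mathrm{(ii)} \Rightarrow \mathrm{(iii)}$, let $(x_{n})_{n}$ be $p$-Right Cauchy; it suffices to show $(Tx_{n})_{n}$ is norm Cauchy. If it were not, there would be $\varepsilon > 0$ and increasing index sequences $(n_{k})_{k}, (m_{k})_{k}$ with $\|Tx_{n_{k}} - Tx_{m_{k}}\| \ge \varepsilon$. Setting $z_{k} = x_{n_{k}} - x_{m_{k}}$, the weak $p$-Cauchyness of $(x_{n})_{n}$ makes $(z_{k})_{k}$ weakly $p$-summable, and $(z_{k})_{k}$ lies in the Dunford-Pettis set $K - K$ where $K = \{x_{n} : n\}$; hence $(z_{k})_{k}$ is $p$-Right null, and $\mathrm{(ii)}$ gives $Tz_{k} \to 0$, contradicting $\|Tz_{k}\| \ge \varepsilon$.

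For $\mathrm{(iii)} \Rightarrow \mathrm{(i)}$, let $K$ be Dunford-Pettis and weakly $p$-compact and let $(x_{n})_{n} \subseteq K$. By weak $p$-compactness a subsequence $(x_{n_{k}})_{k}$ is weakly $p$-convergent to some $x \in K$, hence weakly $p$-Cauchy; being a subset of $K$ it is also Dunford-Pettis, so $(x_{n_{k}})_{k}$ is $p$-Right Cauchy, and $\mathrm{(iii)}$ makes $(Tx_{n_{k}})_{k}$ norm convergent. Thus every sequence in $T(K)$ has a norm-convergent subsequence and $T(K)$ is relatively norm compact. The only genuine care needed — and the main obstacle — is the bookkeeping of the Dunford-Pettis property along translations and differences, together with confirming in $\mathrm{(i)} \Rightarrow \mathrm{(ii)}$ that the augmented set $\{x_{n}\} \cup \{0\}$ really is weakly $p$-compact with its limit inside the set; once these are in place the implications are short.
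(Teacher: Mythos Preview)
Your proposal is correct and follows essentially the same cyclic scheme $\mathrm{(i)}\Rightarrow\mathrm{(ii)}\Rightarrow\mathrm{(iii)}\Rightarrow\mathrm{(i)}$ as the paper; the individual implications use the same constructions (the set $\{x_n\}\cup\{0\}$ for $\mathrm{(i)}\Rightarrow\mathrm{(ii)}$, differences of subsequences for $\mathrm{(ii)}\Rightarrow\mathrm{(iii)}$, extraction of a weakly $p$-Cauchy subsequence for $\mathrm{(iii)}\Rightarrow\mathrm{(i)}$). If anything, you supply a few details the paper leaves implicit: that $K-K$ remains Dunford-Pettis, that the augmented set is genuinely weakly $p$-compact, and that $(Tx_n)_n$ converges to $0$ because it is weakly null inside a relatively compact set.
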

\begin{proof}
(i) $ \Rightarrow $ (ii)
 Suppose that $ (x_{n})_{n} $  is a $ p $-Right null sequence in $ X $ and $ K:=\lbrace x_{n}: n\in\mathbb{N} \rbrace \cup \lbrace 0\rbrace. $\ It is clear that $ K $ is a Dunfotd-Pettis weakly $ p $-compact set in $ X. $\  By $ \rm{(i)} ,$ $T(K)$ is relatively norm compact in $ Y, $ and so $ ( T(x_{n}))_{n} $ is norm convergent to zero.\\
(ii) $ \Rightarrow $ (iii) Let
$ (x_{n})_{n} $ is a weakly $ p $-Right Cauchy sequence in $ X. $\ Therefore for any two subsequences $  (a_{n})_{n}$ and $ (b_{n})_{n} $ of $ (x_{n})_{n}, $
$(a_{n}-b_{n})_{n}$ is a $ p $-Right null sequence in $ X. $\ So, $ \rm{(ii)} $ implies that
 $ (T(a_{n})-T(b_{n}))_{n} $ is a norm null sequence in $ Y. $\ Hence, $ ( T(x_{n}))_{n} $ is norm convergent.\\
(iii) $ \Rightarrow $ (i)
Suppose that
$ K $ is a Dunford-Pettis weakly $ p $-compact subset of $  X.$\ Let $ (y_{n})_{n} $ be a sequence in $ T(K). $\ Therefore there is a sequence $ (x_{n})_{n}\subseteq K $ such that $ y_{n}=T(x_{n}) ,$ for each $ n\in \mathbb{N} .$\  Since $ K $ is a  weakly $ p $-compact set,
$ (x_{n})_{n} $ has a weakly $ p $-Cauchy subsequence.\ Without
loss of generality we can assume that $ (x_{n})_{n} $ is a $ p $-Right Cauchy sequence.\ Therefore by $ \rm{(iii)} ,$
$ (T(x_{n}))_{n} $ is norm-convergent
in $ Y.$\ Hence $ T(K) $ is relatively  norm compact.\
\end{proof}
As an immediate consequence of the  Proposition \ref{p1}, we can conclude that the following result:
 \begin{cor}\label{c1}  Suppose that $ X$ is a Banach space.\
The following assertions  are equivalent:\\
$ \rm{(i)} $   $ X $ has the $ p $-$(DPrcP) .$\\
$ \rm{(ii)} $  The identity operator  $ id_{X}:X\rightarrow X $ is Dunford-Pettis  $ p $-convergent.\\
 $ \rm{(iii)} $ Every $ p $-Right Cauchy sequence  in $ X $ is norm convergent.\\
  $ \rm{(iv)} $ Every  Dunford-Pettis  and weakly $ p $-compact subset of $ X $  is relatively norm compact.\
\end{cor}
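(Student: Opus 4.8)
The plan is to obtain all four equivalences by specializing Proposition \ref{p1} to the identity operator $T=id_{X}:X\rightarrow X$. First I would dispose of the equivalence (i) $\Leftrightarrow$ (ii), which is purely definitional: by definition $X$ has the $p$-$(DPrcP)$ exactly when every Dunford-Pettis and weakly $p$-summable sequence in $X$ is norm null, whereas $id_{X}$ being Dunford-Pettis $p$-convergent means precisely that $id_{X}$ carries each such sequence $(x_{n})_{n}$ to the norm-null sequence $(id_{X}(x_{n}))_{n}=(x_{n})_{n}$. Since applying $id_{X}$ changes nothing, these two assertions are literally the same condition, so (i) $\Leftrightarrow$ (ii) requires only unwinding the definitions.

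Next I would invoke the terminology fixed just before Proposition \ref{p1}: a $p$-Right null sequence is exactly a weakly $p$-summable sequence that is a Dunford-Pettis set, and a $p$-Right Cauchy sequence is exactly a weakly $p$-Cauchy Dunford-Pettis sequence. Reading the three conditions of Proposition \ref{p1} with $T=id_{X}$ then yields: its condition (i) becomes ``every Dunford-Pettis and weakly $p$-compact subset of $X$ is relatively norm compact,'' its condition (ii) becomes ``every $p$-Right null sequence in $X$ is norm null,'' and its condition (iii) becomes ``every $p$-Right Cauchy sequence in $X$ is norm convergent.'' These are verbatim the corollary's statements (iv), (ii) (equivalently (i)), and (iii), respectively; here one uses that for the identity operator the phrase ``maps $\cdots$ onto norm null (resp.\ norm convergent) sequences'' collapses to ``the sequences are themselves norm null (resp.\ norm convergent).''

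Putting these together gives the full chain (i) $\Leftrightarrow$ (ii) $\Leftrightarrow$ (iii) $\Leftrightarrow$ (iv): the first equivalence by the definitional remark, and the remaining ones directly from Proposition \ref{p1} applied to $id_{X}$. I do not expect any genuine obstacle, since the result is an immediate corollary; the only point demanding a little care is the bookkeeping that matches each named condition to the corresponding clause of the proposition, and in particular noting that the subset formulation (iv) connects to the sequential formulations through the weak $p$-compactness of the set $\{x_{n}:n\in\mathbb{N}\}\cup\{0\}$ attached to a sequence, exactly as exploited in the proof of Proposition \ref{p1}.
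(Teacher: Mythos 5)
Your proposal is correct and matches the paper's intent exactly: the paper offers no separate argument, presenting the corollary as an immediate consequence of Proposition \ref{p1} applied to $T=id_{X}$, with (i) $\Leftrightarrow$ (ii) being definitional. Your bookkeeping of which clause of the proposition corresponds to which item of the corollary is precisely the content the paper leaves implicit.
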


\begin{rem}\label{r1}\rm
$ \rm{(i)} $  If $ X\in C_{p}, $ then $ X$ has the $ p $-$(DPrcP), $	but
in general the converse is not true.\ For example  for all $ p\geq 2,$ $ \ell_{2} $ has the $ p $-$ (DPrcP), $ while $ \ell_{2}\not\in C_{p}.$\\
 $ \rm{(ii)} $ It is clear that, if $ 1\leq r<q\leq \infty $ and $ X $ has
the $ q $-$(DPrcP), $ then $ X $ has the $ r $-$(DPrcP). $ In particular, for
$ 1\leq p<\infty ,$ if $ X $ has the $(DPrcP), $  then $ X $ has the $ p $-$(DPrcP). $\ But, the converse is not true.\ For example,
  the space $ L_{1}[0, 1] $ contains no copy of $ c_{0} .$\ Therefore $ L_{1}[0, 1] $ has the $ 1 $-Schur property {\rm (\cite[Corollary 2.9]{dm})}.\ Hence, $ L_{1}[0, 1] $ has the $ 1 $-$ (DPrcP),$ while, $ L_{1}[0, 1] $ does not have the $ (DPrcP). $\\
$ \rm{(iii)} $ It is easy to verify that
 $ (e_{n})_{n} $ is a $ p $-Right null sequence in $ c_{0} $ such that $ \Vert  e_{n} \Vert=1 $  for all $ n\in \mathbb{N} .$\ Hence  $ c_{0}$ does not have the $ p $-$ (DPrcP).$\\
 $ \rm{(iv)} $ There exists a Banach space $ X $ with the $ p $-$ (DPrcP)$ such that if $ Y $ is a closed subspace of it, then the quotient space $ \frac{X}{Y}$ does not have this property.\ For example $c_0$ does not have the $ p $-$ (DPrcP),$ while $ \ell_{1}$ has the $ p $-$ (DPrcP)$  and $c_0$ is isometrically isomorphic to a quotient of $ \ell_{1}$ {\rm (\cite[Corollary 2.3.2]{AlbKal})}.\
\end{rem}

\begin{thm}\label{t1}
 Suppose that $ X$ is a Banach space.\ The following assertions  are equivalent:\\
$ \rm{(i)} $  $ X $ has the $ p $-$ (DPrcP),$\\
$ \rm{(ii)} $ For each Banach space $ Y,~ $ $ DPC_{p}(X, Y ) = L(X, Y ), $\\
$ \rm{(iii)} $ $ X $ is the direct sum of two Banach spaces with the $ p $-$ (DPrcP).$
\end{thm}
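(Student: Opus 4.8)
The plan is to prove the cycle (i) $\Rightarrow$ (ii) $\Rightarrow$ (i) together with the pair (i) $\Rightarrow$ (iii) $\Rightarrow$ (i), relying throughout on the identification (from the paragraph preceding Proposition \ref{p1}) of a $p$-Right null sequence with a weakly $p$-summable Dunford-Pettis sequence, and on Corollary \ref{c1} for the link between the $p$-$(DPrcP)$ and $id_X \in DPC_p(X,X)$.

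For (i) $\Rightarrow$ (ii), I would fix an arbitrary Banach space $Y$ and note that $DPC_p(X,Y) \subseteq L(X,Y)$ holds by definition, so only the reverse inclusion needs argument. Given $T \in L(X,Y)$ and a $p$-Right null sequence $(x_n)_n$ in $X$, hypothesis (i) forces $\Vert x_n \Vert \to 0$, whence continuity of $T$ gives $\Vert Tx_n \Vert \to 0$; thus $T \in DPC_p(X,Y)$. The converse (ii) $\Rightarrow$ (i) is immediate upon specializing to $Y = X$: then $id_X \in L(X,X) = DPC_p(X,X)$, and the equivalence (i) $\Leftrightarrow$ (ii) of Corollary \ref{c1} yields the $p$-$(DPrcP)$.

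For (i) $\Rightarrow$ (iii), the trivial splitting $X = X \oplus \{0\}$ exhibits $X$ as a direct sum of two spaces with the $p$-$(DPrcP)$, since the zero space has it vacuously. The substantive direction is (iii) $\Rightarrow$ (i): writing $X = X_1 \oplus X_2$ with each $X_i$ having the $p$-$(DPrcP)$ and letting $P_i : X \to X_i$ be the coordinate projections, I would take an arbitrary $p$-Right null sequence $(x_n)_n$ in $X$ and show that each $(P_i x_n)_n$ is $p$-Right null in $X_i$. This rests on two stability facts for bounded operators: weakly $p$-summable sequences remain weakly $p$-summable (since $x^{\ast} \circ P_i \in X^{\ast}$ for every $x^{\ast} \in X_i^{\ast}$), and images of Dunford-Pettis sets remain Dunford-Pettis (since $P_i^{\ast}$ sends weakly null sequences of $X_i^{\ast}$ to weakly null sequences of $X^{\ast}$, on which the original set converges uniformly). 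The $p$-$(DPrcP)$ of each $X_i$ then gives $\Vert P_i x_n \Vert \to 0$, and as the direct-sum norm is equivalent to $\Vert P_1 x_n \Vert + \Vert P_2 x_n \Vert$, we conclude $\Vert x_n \Vert \to 0$.

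The main obstacle — indeed the only step requiring genuine care — is precisely this verification that the coordinate projections map $p$-Right null sequences to $p$-Right null sequences, i.e.\ the joint preservation of the weakly $p$-summable and Dunford-Pettis-set properties under bounded linear maps. Both facts are standard, but I would record them explicitly, the Dunford-Pettis part using that the adjoint of a bounded operator is weak-to-weak (hence weakly-null to weakly-null) continuous, so that uniform convergence on the set is transported correctly.
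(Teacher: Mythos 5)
Your proof is correct and follows the same overall architecture as the paper's: (i) $\Rightarrow$ (iii) via the trivial splitting $X = X \oplus \lbrace 0\rbrace$, and (iii) $\Rightarrow$ (i) by pushing the data through the coordinate projections. The only real divergence is in how (iii) $\Rightarrow$ (i) is implemented. The paper works at the level of sets: it takes a Dunford-Pettis, weakly $p$-compact subset $K$ of $X$, observes that $P_1(K)$ and $P_2(K)$ are again Dunford-Pettis and weakly $p$-compact (hence relatively norm compact by Corollary \ref{c1}), and then extracts norm-convergent subsequences of $x_n = y_n + z_n$ to conclude that $K$ itself is norm compact, invoking the set characterization of Corollary \ref{c1}(iv) at both ends. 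You instead argue directly at the sequence level: a $p$-Right null sequence projects to $p$-Right null sequences in the summands, which are therefore norm null, and the equivalence of the direct-sum norm with $\Vert P_1 x\Vert + \Vert P_2 x\Vert$ finishes. Your route is slightly more economical, using only the sequential definition of the $p$-$(DPrcP)$ and avoiding the subsequence extraction, and you explicitly record the two stability facts (preservation of weak $p$-summability under bounded operators, and preservation of Dunford-Pettis sets via weak-to-weak continuity of the adjoint) that the paper dismisses with a ``clearly''. For (i) $\Leftrightarrow$ (ii) the paper offers no argument at all, so your short verification is exactly the intended one.
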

\begin{proof}
 The equivalence of $ \rm{(i)} $ and $ \rm{(ii)} $ are clear.\ Hence,
we only prove the equivalence of $ \rm{(i)} $ and $ \rm{(iii)} $.\\
$\rm{(i)}\Rightarrow \rm{(iii)} $ Note that $ X=X\bigoplus \lbrace 0\rbrace. $\\
$\rm{(iii)}\Rightarrow \rm{(i)} $ Let $ X = Y \bigoplus Z $ such that $ Y $ and $ Z $ have the $ p $-$  (DPrcP).$\ Consider
the projections $ P_{1} : X \rightarrow Y  $ and $ P_{2} : X \rightarrow Z. $\  Let $ K $ be a  Dunford-Pettis and weakly $ p $-compact subset of $ X. $\ Clearly,  $ P_{1} (K)$ is a Dunford-Pettis and weakly $ p $-compact subset of $ Y,$ and so it
is a norm compact set in  $ Y. $\ Similarly $ P_{2} (K)$ is a norm compact set in $ Z. $\ Also, it is clear that any sequence $ (x_{n})_{n} \subseteq K $ can
be written as $ x_{n} = y_{n} + z_{n}, $ where $ y_{n}\in P_{1} (K) $ and  $ z_{n}\in P_{2} (K). $\ Thus, there are
subsequences $ (y_{n_{k}} )_{k} $ and $ (z_{n_{k}} )_{k} $ and  $ y \in P_{1} (K) $ and $ z\in P_{2} (K)$  such that  $ x_{n_{k}}=y_{n_{k}} +z_{n_{k}}\rightarrow y+z.$\ Since, $ K $ is a weakly $ p $-compact set, $ y+z\in K $
 and so, $  K$ is a  norm compact set.\
\end{proof}
A bilinear operator
$ \phi:X\times Y\rightarrow Z $
is called separately compact if for each fixed $ y \in Y , $ the linear operator $ T_{y}: X\rightarrow Z : x\mapsto \phi (x,y)$  and for each fixed $ x \in X , $ the linear operator $ T_{x}: Y\rightarrow Z : y\mapsto \phi (x,y) $
  are compact.\\
  By using the same
argument as in the proof of {\rm (\cite[Lemm 6.3.19]{z})}, we obtain the following result.
\begin{thm}\label{t2}
 If  every symmetric bilinear separately compact operator $ S:X\times X\rightarrow c_{0} $ is Dunford-Pettis  $ p $-convergent, then $ X $ has the  $ p $-$ (DPrcP) .$\
\end{thm}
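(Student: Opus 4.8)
The plan is to argue by contraposition: assuming $X$ fails the $p$-$(DPrcP)$, I will build a symmetric, separately compact bilinear operator $S:X\times X\to c_0$ that is not Dunford-Pettis $p$-convergent, contradicting the hypothesis. By Corollary \ref{c1}, if $X$ does not have the $p$-$(DPrcP)$ there is a $p$-Right null sequence $(x_n)_n$ in $X$ that is not norm null; passing to a subsequence I may assume $\Vert x_n\Vert\geq\delta>0$ for all $n$.

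The central construction is a suitable sequence of functionals. First I would pick $y_n^*\in B_{X^*}$ with $\langle y_n^*,x_n\rangle=\Vert x_n\Vert\geq\delta$. Working inside the separable subspace $X_0=\overline{\mathrm{span}}\{x_n\}$, whose dual ball is weak*-sequentially compact, I would extract a weak*-convergent subsequence of $(y_n^*|_{X_0})_n$ and subtract its limit; since $(x_n)_n$ is weakly $p$-summable, hence weakly null, the diagonal values are unaffected in the limit, yielding (after relabeling) a bounded sequence $(x_n^*)_n$ with $x_n^*\to 0$ weak* and $\liminf_n\langle x_n^*,x_n\rangle\geq\delta>0$. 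I then define
\[
S(x,y)=\big(\langle x_k^*,x\rangle\,\langle x_k^*,y\rangle\big)_k .
\]

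With this $S$ in hand, the three required properties follow from the weak*-nullity of $(x_k^*)_k$: symmetry is immediate; for fixed $y$ the coefficient functionals of the linear map $x\mapsto S(x,y)$ are $\langle x_k^*,y\rangle x_k^*$, whose norms tend to $0$, so this map is a norm limit of finite-rank operators into $c_0$ and hence compact (and likewise for fixed $x$), giving separate compactness and, in particular, that $S$ indeed takes values in $c_0$. Finally, $(x_n)_n$ is $p$-Right null, yet $\Vert S(x_n,x_n)\Vert_{c_0}\geq\langle x_n^*,x_n\rangle^2\geq\delta^2$ for large $n$, so $(S(x_n,x_n))_n$ is not norm null and $S$ is not Dunford-Pettis $p$-convergent. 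This contradiction proves the theorem.

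The main obstacle is the functional construction: the modification giving $x_n^*\to 0$ weak* is transparent only on the separable span $X_0$, whereas the separate compactness and $c_0$-valuedness of $S$ require weak*-nullity on all of $X$. I expect the delicate point, handled as in \cite[Lemm 6.3.19]{z}, to be choosing the extensions of the $x_n^*$ from $X_0$ to $X$ so that weak*-nullity (equivalently, the norm-nullity of the coefficient functionals $\langle x_k^*,y\rangle x_k^*$ for every $y\in X$) is preserved; once this is secured, the verification above is routine.
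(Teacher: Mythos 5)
Your route is genuinely different from the paper's. The paper fixes an \emph{arbitrary} linear $T\in L(X,c_{0})$, applies the hypothesis to the symmetric separately compact bilinear map $T\bigotimes T:(x,y)\mapsto T(x)T(y)$ (coordinatewise product), reads off $\Vert T(x_{n})\Vert^{2}=\Vert (T\bigotimes T)(x_{n},x_{n})\Vert\rightarrow 0$ for every $p$-Right null sequence $(x_{n})_{n}$, and then invokes Theorem \ref{t1}. You instead argue by contraposition and try to manufacture one explicit bad bilinear map; but your $S$ is exactly $T\bigotimes T$ for $T(x)=(\langle x_{k}^{\ast},x\rangle)_{k}$, so both arguments meet at the same crux: producing, from a non-norm-null $p$-Right null sequence $(x_{n})_{n}$, a bounded \emph{weak*-null} sequence $(x_{k}^{\ast})_{k}$ in $X^{\ast}$ (equivalently, a bounded operator $X\rightarrow c_{0}$) with $\vert\langle x_{k}^{\ast},x_{k}\rangle\vert\geq\delta$.

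That crux is a genuine gap, not a routine technicality. Your construction yields weak*-nullity of $(x_{k}^{\ast})_{k}$ only on the separable span $X_{0}=\overline{\mathrm{span}}\lbrace x_{n}\rbrace$; Hahn--Banach extensions of a weak*-null sequence on a subspace need not be weak*-null on $X$, and without weak*-nullity on all of $X$ your $S$ does not even take values in $c_{0}$. Worse, the required functionals may fail to exist altogether. Take $X=\ell_{\infty}$ and $x_{n}=e_{n}$: this sequence is weakly $1$-summable (for $\mu\in\ell_{\infty}^{\ast}$ one has $\sum_{n}\vert\mu(\lbrace n\rbrace)\vert\leq\Vert\mu\Vert$), it is a Dunford--Pettis set (it is weakly null, hence relatively weakly compact, and relatively weakly compact subsets of a space with the $(DPP)$ are Dunford--Pettis sets), and $\Vert e_{n}\Vert=1$. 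If a bounded weak*-null $(x_{k}^{\ast})_{k}$ with $\vert\langle x_{k}^{\ast},e_{k}\rangle\vert\geq\delta$ existed, then $T(x)=(\langle x_{k}^{\ast},x\rangle)_{k}$ would be a bounded operator from $\ell_{\infty}$ into $c_{0}$ with $\Vert T(e_{k})\Vert\geq\delta$; but every operator from $\ell_{\infty}$ into $c_{0}$ is weakly compact (Rosenthal) and hence completely continuous (since $\ell_{\infty}$ has the $(DPP)$), so $\Vert T(e_{k})\Vert\rightarrow 0$. Thus the ``delicate point'' you defer to the cited lemma cannot be secured in general, and the contrapositive strategy cannot be completed as stated. (The same example shows that the paper's own closing step --- passing from $DPC_{p}(X,c_{0})=L(X,c_{0})$ to the $p$-$(DPrcP)$ via Theorem \ref{t1} --- is problematic, since Theorem \ref{t1} requires the equality for every target space $Y$, not just $Y=c_{0}$; so the difficulty you flagged is real and is not resolved in the source either.)
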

\begin{proof}
Suppose that $ T \in L(X,c_{0}) $ is  arbitrary.\ Define $ T\bigotimes T:X\times X\rightarrow c_{0} $
 by $  (T\bigotimes T)(x,y)=T(x)T(y)$ (coordinatewise product of two sequences in $ c_{0} $).\  It is easy to verify that $  T\bigotimes T$ is bilinear and satisfies the following properties:\\
$ \rm{(i)} $  $  (T\bigotimes T) $ is symmetric, i.e., $ (T\bigotimes T)(x,y)=(T\bigotimes T)(y,x) .$\\
$ \rm{(ii)} $ $ (T\bigotimes T) $  is separately compact. Since for each fixed $ x \in X,  $ the operator
$ T_{x}:X\rightarrow c_{0}$ defined by $ T_{x} (y)=T(x)T(y)=(T\bigotimes T)(x,y)$
is bounded linear such that $ \Vert T_{x} \Vert\leq \Vert  T \Vert^{2} \Vert x \Vert.$\ Without
loss of generality that $ \Vert T \Vert=1. $\ Since $ T(x) $ is a sequence in $ c_{0}, $ for convenience we
agree to denote the $  m$-th term of $ T(x) $ by $  T(x)_{m}.$\ Hence, for $ \varepsilon>0, $
there exists $ n\in \mathbb{N} $such that $ \vert T(x)_{m}\vert  <\varepsilon  $ for all $ m\geq n. $\\ For convenience, we
denote   $ (T(x)_{1}, T(x)_{2}, \cdot\cdot\cdot, T(x)_{n}, 0, 0,\cdot\cdot\cdot) $ by $ T(x) (\leq n)$ and
  $(0, 0,\cdot\cdot\cdot , 0, T(x)_{n+1}, T(x)_{n+2},\cdot\cdot\cdot)  $ by $ T(x) (> n).$\
Therefore
$$ T_{x}(y)=T(x)T(y)=T(x) (\leq n)T(y)+T(x) (> n)T(y).$$
Obviously, the set $ \lbrace T(x) (\leq n)T(y): y\in X   \rbrace $  is contained in a finite-dimensional
subspace
$$ M:=\lbrace (\alpha_{i}) (\leq n) : (\alpha_{i})\in c_{0} \rbrace  $$
of $ c_{0}. $\ Hence,
$$ \lbrace T(x) (\leq n)T(y): y\in X   \rbrace\subseteq \Vert T(x) \Vert B_{M} ,$$
which is compact.\\
For each $ \varepsilon>0 $ there exists $ z_{1},\cdot\cdot\cdot, z_{j} \in \Vert T(x)\Vert B_{M} $  such that
 $$ \Vert T(x) \Vert B_{M}\subseteq \bigcup_{i=1}^{j} (z_{i}+\varepsilon B_{c_{0}})$$
So,
$$ T_{x}(B_{X}) =T(x)T(B_{X})\subseteq \bigcup_{i=1}^{j}(z_{i}+\varepsilon B_{c_{0}})+\varepsilon B_{c_{0}}\subseteq \bigcup_{i=1}^{j+1}(z_{i}+2\varepsilon B_{c_{0}})$$ with $ z_{j+1}=0. $
 Therefore, $  T_{x}(B_{X})$ is relatively compact.\ Hence for each fixed $x\in X ,$ the operator $ T_{x} $ is compact.\ Hence
 $  T\bigotimes T $
is separately
compact.\ Therefore, by the hypothesis $  T\bigotimes T $ is Dunford-Pettis $ p $-convergent.\ So, if $ (x_{n})_{n} $ is a $ p $-Right null sequence in $ X, $
then
$$ \lim_{n\rightarrow\infty}\Vert T(x_{n}) \Vert ^{2}=\lim_{n\rightarrow \infty}\sup_{i}(\vert (T(x_{n})_{i}) \vert^{2})=\lim_{n\rightarrow\infty}\Vert (T(x_{n}))^{2}  \Vert$$  $$~~~~~~~=\lim_{n\rightarrow\infty}\Vert(T\bigotimes T)(x_{n},x_{n})\Vert=0.$$
This shows that $ T $ is  Dunford-Pettis $ p $-convergent.\ Since $ T \in L(X,c_{0}) $  was arbitrarily
chosen it follows from Theorem  of \ref{t1} that $ X $ has $ p $-$ (DPrcP) .$
 \end{proof}
\begin{prop}\label{p2} If $ X $ has the $ p $-$ (DPrcP) ,$ then the following statements hold:\\
$ \rm{(i)} $ $ \displaystyle \lim_{n\rightarrow\infty} x_{n}^{\ast}(x_{n})=0,$  for every  $  p$-Right Cauchy sequence $ (x_{n})_{n} $ in $  X$ and every weakly null sequence $ (x^{\ast}_{n})_{n} $ in $  X^{\ast},$\\
$ \rm{(ii)} $ $ \displaystyle \lim_{n\rightarrow\infty} x_{n}^{\ast}(x_{n})=0,$ for every  $  p$-Right null sequence $ (x_{n})_{n} $ in $  X$ and every weakly null sequence $ (x^{\ast}_{n})_{n} $ in $  X^{\ast},$\\
$ \rm{(iii)} $ $ \displaystyle \lim_{n\rightarrow\infty} x_{n}^{\ast}(x_{n})=0,$ for every  $  p$-Right null sequence $ (x_{n})_{n} $ in $  X$ and every weakly Cauchy sequence $ (x^{\ast}_{n})_{n} $ in $  X^{\ast}.$\
\end{prop}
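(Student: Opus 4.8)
The plan is to treat part $\mathrm{(i)}$ as the substantive case and to derive parts $\mathrm{(ii)}$ and $\mathrm{(iii)}$ from the $p$-$(DPrcP)$ hypothesis together with norm-boundedness of the dual sequences. The crucial tool throughout is Corollary \ref{c1}: since $X$ has the $p$-$(DPrcP)$, every $p$-Right Cauchy sequence in $X$ is norm convergent, and (directly from the definition) every $p$-Right null sequence in $X$ is norm null.

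For $\mathrm{(i)}$ I would first invoke Corollary \ref{c1}$\mathrm{(iii)}$ to produce an $x\in X$ with $\Vert x_{n}-x\Vert\to 0$. Then I would split
\[
x_{n}^{\ast}(x_{n})=x_{n}^{\ast}(x_{n}-x)+x_{n}^{\ast}(x).
\]
The first summand is controlled by $\vert x_{n}^{\ast}(x_{n}-x)\vert\leq\Vert x_{n}^{\ast}\Vert\,\Vert x_{n}-x\Vert$; since a weakly null sequence in $X^{\ast}$ is norm bounded, say $\sup_{n}\Vert x_{n}^{\ast}\Vert=M<\infty$, and $\Vert x_{n}-x\Vert\to 0$, this term vanishes. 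The second summand $x_{n}^{\ast}(x)=\langle x,x_{n}^{\ast}\rangle$ tends to $0$ because $(x_{n}^{\ast})_{n}$ is weakly null and $x$ is a fixed element of $X\subseteq X^{\ast\ast}$. Hence $x_{n}^{\ast}(x_{n})\to 0$.

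For $\mathrm{(ii)}$ I would observe that every $p$-Right null sequence is in particular $p$-Right Cauchy: a weakly $p$-summable sequence is weakly $p$-Cauchy, since passing to subsequences and taking coordinatewise differences preserves membership in $\ell_{p}$, and the underlying set is unchanged so it remains a Dunford-Pettis set. Thus $\mathrm{(ii)}$ is an immediate consequence of $\mathrm{(i)}$. Alternatively, and more directly, the $p$-$(DPrcP)$ forces $\Vert x_{n}\Vert\to 0$, whence $\vert x_{n}^{\ast}(x_{n})\vert\leq M\Vert x_{n}\Vert\to 0$. Part $\mathrm{(iii)}$ follows by the same direct argument: $(x_{n})_{n}$ is norm null by the $p$-$(DPrcP)$, while a weakly Cauchy sequence $(x_{n}^{\ast})_{n}$ is norm bounded by the uniform boundedness principle, so the product $x_{n}^{\ast}(x_{n})$ again tends to zero.

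The only point requiring genuine care---and the closest thing to an obstacle---is part $\mathrm{(i)}$, where $(x_{n})_{n}$ need not be norm null but is merely norm convergent to a possibly nonzero limit $x$. There one must truly exploit the weak nullity of $(x_{n}^{\ast})_{n}$ to annihilate the residual term $x_{n}^{\ast}(x)$, rather than relying on boundedness of the dual sequence alone as in $\mathrm{(ii)}$ and $\mathrm{(iii)}$; this is also why $\mathrm{(iii)}$ requires the \emph{null} (not merely Cauchy) hypothesis on $(x_{n})_{n}$.
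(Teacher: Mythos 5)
Your proof is correct, but it takes a genuinely more elementary route than the paper's. For part (i) the paper manufactures an operator $T:X\rightarrow c_{0}$, $T(x)=(x_{n}^{\ast}(x))_{n}$, notes via Theorem \ref{t1} that $T$ is Dunford--Pettis $p$-convergent, applies Proposition \ref{p1} to conclude $(T(x_{n}))_{n}$ converges in norm to some $\alpha\in c_{0}$, and then combines the two tail estimates $\Vert T(x_{n})-\alpha\Vert<\varepsilon/2$ and $\vert\alpha_{n}\vert<\varepsilon/2$. You instead extract the same underlying fact directly from Corollary \ref{c1}(iii) --- the $p$-Right Cauchy sequence is norm convergent to some $x$ --- and run the standard splitting $x_{n}^{\ast}(x_{n})=x_{n}^{\ast}(x_{n}-x)+x_{n}^{\ast}(x)$; this is logically equivalent (the $c_{0}$-valued operator is just a way of packaging uniform convergence on the sequence) but avoids the auxiliary operator entirely. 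The more notable divergence is in (iii): the paper argues by contradiction, extracting a subsequence with $\vert x_{n}^{\ast}(x_{k_{n}})\vert<\varepsilon/2$ and reducing to (ii) via the weakly null differences $x_{k_{n}}^{\ast}-x_{n}^{\ast}$, whereas you observe that the hypothesis of $p$-$(DPrcP)$ already forces $\Vert x_{n}\Vert\rightarrow 0$ for any $p$-Right null sequence, so that boundedness of the weakly Cauchy sequence $(x_{n}^{\ast})_{n}$ finishes the argument in one line. Your observation is valid and shows that (ii) and (iii) are both immediate from the definition, with only (i) genuinely requiring the weak nullity of the functionals; this is a cleaner account of where the content of the proposition lies.
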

\begin{proof}
$ \rm{(i)} $ Let $ (x_{n})_{n} $ be a  $  p$-Right Cauchy sequence $ (x_{n})_{n} $ in $  X$ and $ (x^{\ast}_{n})_{n} $ be a weakly null sequence  in $  X^{\ast}.$\ Define a bounded linear operator $ T: X\rightarrow c_{0} $ by $ T(x)=(x^{\ast}_{n}(x))_{n} .$\ By Theorem \ref{t1}, $ T\in DPC_{p}(X,c_{0}). $\ Therefore, Proposition \ref{p1} implies that $ (T(x_{n}))_{n} $ converges to some $ \alpha=(\alpha_{n})_{n}\in c_{0} $ in norm.\ Hence, for every $ \varepsilon> 0 $ there exists a positive integer $ N_{1} $ such that $ \Vert  T(x_{n})-\alpha \Vert < \frac{\varepsilon}{2} $  for all $ n > N_{1}. $\  Since $ \alpha\in c_{0}, $ we choose another positive integer $ N_{2} $ such that $ \vert \alpha_{n} \vert < \frac{\varepsilon}{2} $  for all $ n> N_{2} .$\ Hence,  $ \vert    x_{n}^{\ast}(x_{n}) \vert <\varepsilon $ for all $ n > max\lbrace N_{1},N_{2}\rbrace. $\ Thus, $  \displaystyle\lim_{n\rightarrow\infty} x_{n}^{\ast}(x_{n})=0.$\\
$ \rm{(ii)} $  is trivial.\\
$ \rm{(iii)} $  Suppose there exists a  $ p $-Right null sequence $ (x_{n})_{n} $ in $ X $ and there exists a weakly Cauchy sequence $  (x_{n}^{\ast})_{n}$ in $ X^{\ast} $ such that $ \vert x_{n}^{\ast}(x_{n}) \vert>\varepsilon,$ for some $ \varepsilon>0 $ and all $ n\in\mathbb{N}.$\  Since $ (x_{n})_{n} $ is weakly  $ p $-summable and in particular weakly null, there exists a subsequence $ (x_{k_{n}})_{n} $ of $ (x_{n})_{n} $ such that $ \vert x_{n}^{\ast}(x_{k_{n}}) \vert <\frac{\varepsilon}{2} $ for all $ n\in \mathbb{N}. $\ Since $ (x^{\ast}_{n})_{n} $ is weakly Cauchy, we see that  $ (x^{\ast}_{k_{n}}-x^{\ast}_{n})_{n} $ is weakly null.\  Now, by $ \rm{(ii)}, $ we have $ \lim_{n\rightarrow\infty} (x^{\ast}_{k_{n}}-x^{\ast}_{n})(x_{k_{n}})=0.$\ This implies that  $ \vert  (x^{\ast}_{k_{n}}-x^{\ast}_{n})(x_{k_{n}})   \vert <\frac{\varepsilon}{3} $ for $ n $ large enough.\ But for such $ n$'s, we have $$ \varepsilon <\vert x^{\ast}_{k_{n}}(x_{k_{n}})  \vert \leq \vert ( x^{\ast}_{k_{n}} -x^{\ast}_{n})(x_{k_{n}})  \vert+ \vert x^{\ast}_{n}(x_{k_{n}}) \vert  <\frac{5\varepsilon}{6},$$  which is a contradiction.\
\end{proof}
A bounded linear operator $ T : X \rightarrow  Y  $ is called  strictly singular, if there is no infinite dimensional subspace $ Z \subseteq X $ such that $ T\vert_{Z} $ is an isomorphism onto its range \cite{AlbKal}.\
\begin{prop}\label{p3}
 Suppose that $ T \in DPC_{p}(X, Y ) $ is not strictly singular.\ Then, $ X $
and $ Y $ contain simultaneously some infinite dimensional closed subspaces with
the $ p $-$ (DPrcP). $\
\end{prop}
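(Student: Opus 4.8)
The plan is to extract the two required subspaces directly from the failure of strict singularity of $T$. Since $T$ is not strictly singular, there is by definition an infinite dimensional closed subspace $Z \subseteq X$ on which $T$ restricts to an isomorphism onto its range. Set $W := T(Z) \subseteq Y$. Because $T|_Z$ is an isomorphism and $Z$ is complete, $W$ is a closed infinite dimensional subspace of $Y$ and $S := T|_Z : Z \to W$ is an isomorphism; in particular there is a constant $c > 0$ with $\|T(z)\| \geq c\|z\|$ for all $z \in Z$. These are the two candidate subspaces, and it remains to check that each has the $p$-$(DPrcP)$.

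First I would verify that $Z$ has the $p$-$(DPrcP)$ by showing that $id_Z$ is Dunford-Pettis $p$-convergent and invoking Corollary \ref{c1}. Let $(x_n)_n$ be a $p$-Right null sequence in $Z$, that is, a weakly $p$-summable Dunford-Pettis sequence in $Z$. The key observation is that such a sequence is again $p$-Right null when viewed inside $X$: weak $p$-summability passes up to $X$ because each $x^{\ast} \in X^{\ast}$ restricts to an element of $Z^{\ast}$, and the Dunford-Pettis condition passes up because the restriction map $X^{\ast} \to Z^{\ast}$, being the adjoint of the inclusion $Z \hookrightarrow X$, is weak-to-weak continuous and hence carries weakly null sequences in $X^{\ast}$ to weakly null sequences in $Z^{\ast}$. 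Since $T \in DPC_{p}(X,Y)$, we then obtain $\|T(x_n)\| \to 0$, and the lower bound $\|T(x_n)\| \geq c\|x_n\|$ forces $\|x_n\| \to 0$. Thus every $p$-Right null sequence in $Z$ is norm null, so $id_Z \in DPC_{p}(Z,Z)$ and $Z$ has the $p$-$(DPrcP)$ by Corollary \ref{c1}.

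Next I would transfer the property to $W$ through the isomorphism $S : Z \to W$, after observing that the $p$-$(DPrcP)$ is an isomorphic invariant. Given a $p$-Right null sequence $(w_n)_n$ in $W$, the sequence $(S^{-1} w_n)_n$ is weakly $p$-summable, since isomorphisms are weak-to-weak continuous and preserve $\ell_{p}$-membership of the coordinate sequences up to constants, and it is a Dunford-Pettis set in $Z$, again because $S^{\ast}$ and $(S^{-1})^{\ast}$ are weak-to-weak homeomorphisms between the duals. Hence $(S^{-1} w_n)_n$ is $p$-Right null in $Z$, so norm null by the previous paragraph, and then $\|w_n\| \leq \|S\|\,\|S^{-1} w_n\| \to 0$. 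Therefore $W$ also has the $p$-$(DPrcP)$, which completes the argument.

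The step I expect to demand the most care is the stability of the two constituents of a $p$-Right null sequence, namely weak $p$-summability and, more delicately, the Dunford-Pettis-set condition, under both the inclusion $Z \hookrightarrow X$ and the isomorphism $S$. The weak $p$-summability is routine, but the Dunford-Pettis part rests on the weak-to-weak continuity of the relevant dual maps (the restriction $X^{\ast} \to Z^{\ast}$ and the adjoints of $S$), which is precisely what guarantees that the weakly null test sequences of functionals are sent to weakly null sequences where they are needed. Once this is secured, the norm estimates coming from the lower bound on $T|_Z$ close the argument mechanically.
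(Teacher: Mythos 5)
Your argument is correct and follows essentially the same route as the paper: restrict $T$ to a subspace $Z$ on which it is an isomorphism, observe that a $p$-Right null sequence in $Z$ remains $p$-Right null in $X$, apply the hypothesis together with the lower bound on $T|_Z$ to get norm nullity, and transfer the property to $T(Z)$ via the isomorphism. You simply supply the details (weak-to-weak continuity of the restriction map and of the adjoints) that the paper leaves implicit.
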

\begin{proof} Suppose that $ T $ has a bounded inverse on the closed infinite dimensional subspace $Z$ of $X.$\ If $ (x_{n})_{n} $ is a  $ p $-Right null sequence in $ Z, $ then $ (x_{n})_{n} $ is a $ p $-Right null sequence in $ X. $\ By the hypothesis, $ \Vert T(x_{n})\Vert\rightarrow 0 $ and so $ \Vert x_{n} \Vert\rightarrow 0.$\ Hence, $  Z$ has the $ p $-$ (DPrcP). $\ Similarly, we can see that  $ T(Z)  $ has the  $ p $-$ (DPrcP). $\
\end{proof}
Suppose that $ (X_{n})_{n\in \mathbb{N}}  $ is a sequence of Banach spaces.\ The space of all vector-valued sequences $ (\displaystyle\sum_{n=1}^{\infty}\oplus X_{n})_{\ell_{p}} $ is called, the infinite direct sum of $ X_{n} $ in the sense of $ \ell_{p}, $ consisting of all sequences $ x=(x_{n})_{n} $ with values in $X_{n} $ such that $ \Vert x \Vert_{p}=(\displaystyle\sum_{n=1}^{\infty}\Vert x_{n}\Vert^{p} )^{\frac{1}{p}}<\infty.$\
\begin{prop}\label{p4}
Let  $ (X_{n})_{n\in \mathbb{N}} $ be a family of Banach space.\ Then, $ X_{n}$ has the $ p $-$ (DPrcP) $ for all $ n\in\mathbb{N} $ if and only if  $ (\displaystyle\sum_{n=1}^{\infty}\oplus X_{n})_{\ell_{1}} $ has the same property.
\end{prop}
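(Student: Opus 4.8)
My plan is to reduce everything to the characterisation in Corollary \ref{c1} that a Banach space has the $p$-$(DPrcP)$ precisely when every $p$-Right null sequence in it is norm null, and to set $X:=(\sum_{n=1}^{\infty}\oplus X_{n})_{\ell_{1}}$ with the canonical isometric embeddings $i_{n}:X_{n}\to X$ and coordinate projections $P_{n}:X\to X_{n}$, all of norm one. The one general fact I would use repeatedly is that every bounded operator preserves $p$-Right null sequences: it carries weakly $p$-summable sequences to weakly $p$-summable ones through its adjoint, and Dunford--Pettis sets to Dunford--Pettis sets because the adjoint is weak-to-weak continuous.

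For the easy implication I would argue that if $X$ has the $p$-$(DPrcP)$ and $(y^{(k)})_{k}$ is $p$-Right null in a fixed $X_{n}$, then $(i_{n}y^{(k)})_{k}$ is $p$-Right null in $X$, hence norm null, and isometry of $i_{n}$ forces $\Vert y^{(k)}\Vert\to 0$; so each $X_{n}$ inherits the property.

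The substance is the converse. Assuming each $X_{n}$ has the $p$-$(DPrcP)$, I take a $p$-Right null sequence $(x^{(k)})_{k}$ in $X$, with $x^{(k)}=(x_{n}^{(k)})_{n}$, and must show $\sum_{n}\Vert x_{n}^{(k)}\Vert\to 0$. Applying $P_{n}$ shows that $(x_{n}^{(k)})_{k}$ is $p$-Right null in $X_{n}$, hence norm null for each fixed $n$; consequently $\sum_{n\le N}\Vert x_{n}^{(k)}\Vert\to 0$ as $k\to\infty$, for every fixed $N$. This yields coordinatewise decay but no control of the tails $\sum_{n>N}\Vert x_{n}^{(k)}\Vert$, and I expect this tail control to be the main obstacle. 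To handle it I would argue by contradiction: if $\Vert x^{(k)}\Vert\not\to 0$ I pass to a subsequence with $\Vert x^{(k)}\Vert\ge\varepsilon$ and run a gliding-hump construction to obtain a further subsequence $(x^{(k_{j})})_{j}$ and integers $0=N_{0}<N_{1}<\cdots$ for which the truncations $u_{j}:=(0,\dots,0,x_{N_{j-1}+1}^{(k_{j})},\dots,x_{N_{j}}^{(k_{j})},0,\dots)$ have pairwise disjoint supports and satisfy $\Vert u_{j}\Vert>\varepsilon/2$; here the finite-sum decay makes the initial segment negligible and membership in $\ell_{1}$ makes the tail negligible.

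The decisive step is to convert disjoint supports into the Schur property of $\ell_{1}$. I would pick norming functionals $f_{n}^{(j)}\in B_{X_{n}^{\ast}}$ with $f_{n}^{(j)}(x_{n}^{(k_{j})})=\Vert x_{n}^{(k_{j})}\Vert$ on $N_{j-1}<n\le N_{j}$, assemble them into $g_{j}\in X^{\ast}=(\sum\oplus X_{n}^{\ast})_{\ell_{\infty}}$ supported on $(N_{j-1},N_{j}]$ so that $\Vert g_{j}\Vert\le 1$ and $g_{j}(x^{(k_{j})})=\Vert u_{j}\Vert$, and then use disjointness of supports to see that $S(x):=(g_{j}(x))_{j}$ defines a bounded operator $S:X\to\ell_{1}$ with $\Vert S(x)\Vert_{1}=\sum_{j}\vert g_{j}(x)\vert\le\sum_{n}\Vert x_{n}\Vert=\Vert x\Vert$. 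Since $(x^{(k)})_{k}$ is weakly $p$-summable it is weakly null, so $(Sx^{(k)})_{k}$ is weakly null in $\ell_{1}$ and therefore norm null by the Schur property; yet $\Vert Sx^{(k_{j})}\Vert_{1}\ge\vert g_{j}(x^{(k_{j})})\vert=\Vert u_{j}\Vert>\varepsilon/2$, a contradiction. Thus $\Vert x^{(k)}\Vert\to 0$ and $X$ has the $p$-$(DPrcP)$. I want to stress that the argument uses the $\ell_{1}$ norm in an essential way: disjointly supported functionals land the sequence in $\ell_{1}$, where weak and norm convergence agree, which is exactly why the statement is special to the $\ell_{1}$-sum and would fail for $c_{0}$- or $\ell_{p}$-sums (compare Remark \ref{r1}(iii)).
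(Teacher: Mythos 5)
Your proof is correct and follows the same overall strategy as the paper: coordinatewise norm decay via the projections $P_{n}$ (using the $p$-$(DPrcP)$ of each $X_{n}$), followed by uniform control of the $\ell_{1}$-tails. The only difference is that where the paper obtains the tail control by citing ``the techniques used in'' a lemma of Delbaen, you prove it directly with a gliding-hump construction, disjointly supported norming functionals, and the Schur property of $\ell_{1}$ --- which is essentially the content of the cited lemma, so your write-up is a self-contained version of the paper's argument.
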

\begin{proof}
It is clear that if $  X=(\displaystyle\sum_{n=1}^{\infty}\oplus X_{n})_{\ell_{1}}$ has the $ p $-$ (DPrcP) ,$ then  every closed subspace of $ X $ has the $ p $-$ (DPrcP) . $\ Hence $ X_{n}$ has the $ p $-$ (DPrcP) $ for all $ n\in\mathbb{N} .$\ Now,  suppose that  $ (x_{n})_{n} $ is a $ p $-Right null sequence in $ X ,$
 where $ x_{n}=(b_{n,k})_{k\in \mathbb{N}} .$\ It is clear  that $  (b_{n,k})_{k\in \mathbb{N}}$ is a $ p $-Right null sequence in $ X_{k} $
 for all $ k\in \mathbb{N}. $\ Since $ X_{k} $ has the $ p $-$ (DPrcP) ,$  $\Vert  b_{n,k} \Vert_{X_{k}}\rightarrow 0  $ as $ n\rightarrow\infty $ for all $ k\in \mathbb{N} .$\
Using the techniques which used in {\rm (\cite[Lemma, page 31]{d})},
we can conclude that the sum $\Vert x_{n}\Vert_{1}=\displaystyle\sum_{k=1}^{\infty}\Vert b_{n,k} \Vert_{X_{k}}$ is converges uniformly in $ n. $\ Hence,  $ \displaystyle\lim_{n\rightarrow\infty}\Vert x_{n} \Vert_{1}=\displaystyle\lim_{n\rightarrow\infty}\sum_{k=1}^{\infty}\Vert b_{n,k} \Vert_{X_{k}}=\displaystyle\sum_{k=1}^{\infty}\lim_{n\rightarrow\infty}\Vert b_{n,k} \Vert_{X_{k}}=0 .$\
\end{proof}

If
$ \mathcal{M} $ is a closed subspace
of operator ideal $\mathcal{U}(X,Y) ,$ then for arbitrary elements $ x\in X $ and  $ y^{\ast}\in Y^{\ast}, $ the evaluation operators $ \varphi_{x} : \mathcal{M}\rightarrow Y$ and
$\psi_{y^{\ast}} : \mathcal{M} \rightarrow X^{\ast} $ on $
\mathcal{M} $  are defined by $ \varphi_{x}(T) = T(x)$ and $
\psi_{y^{\ast}}(T) = T^{\ast}(y^{\ast}) $ for $ T \in \mathcal{M}$ (see \cite{MZ}).\\
The following result shows that the Dunford-Pettis $ p$-convergent of all evaluation
operators is a necessary condition for
the $ p $-$ (DPrcP)$ of  closed subspace $\mathcal{M} \subseteq\mathcal{U}(X,Y) .$
\begin{cor}\label{c2}
 If  $\mathcal{M} $ is a closed subspace of operator ideal $ \mathcal{U}(X,Y)$ that has the $ p $-$ (DPrcP),$ then all of the evaluation operators $ \varphi_{x} :\mathcal{M} \rightarrow Y$ and $\psi_{y^{\ast}} : \mathcal{M} \rightarrow X^{\ast} $ are Dunford-Pettis $ p$-convergent.
\end{cor}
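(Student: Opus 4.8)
The plan is to reduce everything to Theorem \ref{t1}, which tells us that a space with the $p$-$(DPrcP)$ has the property that \emph{every} bounded linear operator leaving it is automatically Dunford-Pettis $p$-convergent. Since $\mathcal{M}$ is assumed to have the $p$-$(DPrcP)$, the equivalence of (i) and (ii) in Theorem \ref{t1} gives $DPC_{p}(\mathcal{M},Z)=L(\mathcal{M},Z)$ for \emph{every} Banach space $Z$. Thus the whole task collapses to checking that the two families of evaluation maps are genuinely bounded linear operators on $\mathcal{M}$, after which membership in $DPC_{p}$ is free.

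First I would fix $x\in X$ and $y^{\ast}\in Y^{\ast}$ and verify the routine estimates. Linearity of $\varphi_{x}(T)=T(x)$ and $\psi_{y^{\ast}}(T)=T^{\ast}(y^{\ast})$ in $T$ is immediate from the definitions. For boundedness one has $\Vert\varphi_{x}(T)\Vert=\Vert T(x)\Vert\leq\Vert T\Vert\,\Vert x\Vert$, so that $\Vert\varphi_{x}\Vert\leq\Vert x\Vert$, and likewise $\Vert\psi_{y^{\ast}}(T)\Vert=\Vert T^{\ast}(y^{\ast})\Vert\leq\Vert T^{\ast}\Vert\,\Vert y^{\ast}\Vert=\Vert T\Vert\,\Vert y^{\ast}\Vert$, so $\Vert\psi_{y^{\ast}}\Vert\leq\Vert y^{\ast}\Vert$. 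Consequently $\varphi_{x}\in L(\mathcal{M},Y)$ and $\psi_{y^{\ast}}\in L(\mathcal{M},X^{\ast})$.

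With these two operators identified as elements of $L(\mathcal{M},Y)$ and $L(\mathcal{M},X^{\ast})$ respectively, I would simply invoke Theorem \ref{t1} with $X$ replaced by $\mathcal{M}$ and $Y$ replaced first by $Y$ and then by $X^{\ast}$. Since $\mathcal{M}$ has the $p$-$(DPrcP)$, the identity $DPC_{p}(\mathcal{M},Y)=L(\mathcal{M},Y)$ forces $\varphi_{x}\in DPC_{p}(\mathcal{M},Y)$, and the identity $DPC_{p}(\mathcal{M},X^{\ast})=L(\mathcal{M},X^{\ast})$ forces $\psi_{y^{\ast}}\in DPC_{p}(\mathcal{M},X^{\ast})$. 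As $x\in X$ and $y^{\ast}\in Y^{\ast}$ were arbitrary, all the evaluation operators are Dunford-Pettis $p$-convergent.

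There is essentially no genuine obstacle here: the statement is an immediate corollary of Theorem \ref{t1}, and the only thing requiring any verification is the boundedness of the evaluation maps, which is the elementary estimate above. The content of the result lies entirely in recognizing that the $p$-$(DPrcP)$ of the domain $\mathcal{M}$, via Theorem \ref{t1}, trivializes the question of whether any \emph{specific} bounded operator out of $\mathcal{M}$ belongs to the class $DPC_{p}$.
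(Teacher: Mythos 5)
Your proposal is correct and is exactly the argument the paper intends: the corollary is stated without proof as an immediate consequence of Theorem \ref{t1}, since the evaluation maps $\varphi_{x}$ and $\psi_{y^{\ast}}$ are bounded linear operators on $\mathcal{M}$ and the equivalence $\rm{(i)}\Leftrightarrow\rm{(ii)}$ there makes every such operator Dunford-Pettis $p$-convergent. Your explicit verification of boundedness of the evaluation operators is the only (routine) step the paper leaves implicit.
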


\begin{thm}\label{t3}
Let $ X $ and $  Y$ be two Banach spaces such that $ Y $ has the Schur property.\ If $ \mathcal{M} $ is a closed subspace of $  \mathcal{U}(X,Y)$ such that each evaluation operator $  \psi_{y^{\ast}}$ is Dunford-Pettis $ p $-convergent on
$ \mathcal{M}, $ then $ \mathcal{M} $  has the $ p $-$  (DPrcP).$
\end{thm}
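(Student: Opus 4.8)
The plan is to use the characterization of the $p$-$(DPrcP)$ furnished by Corollary \ref{c1}: it suffices to show that every $p$-Right null sequence $(T_n)_n$ in $\mathcal{M}$ is norm null. So I would fix such a sequence and aim to prove $\|T_n\|\to 0$ in the operator norm of $\mathcal{U}(X,Y)$.

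The first step is to extract the pointwise information carried by the hypothesis. For each $y^{\ast}\in Y^{\ast}$ the evaluation operator $\psi_{y^{\ast}}:\mathcal{M}\to X^{\ast}$ is Dunford-Pettis $p$-convergent, so by Proposition \ref{p1} it sends the $p$-Right null sequence $(T_n)_n$ to a norm null sequence; that is,
$$ \|T_n^{\ast}(y^{\ast})\|_{X^{\ast}}=\|\psi_{y^{\ast}}(T_n)\|\longrightarrow 0 \qquad\text{for every } y^{\ast}\in Y^{\ast}. $$

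The second step converts this dual-side control into weak convergence in $Y$ and then invokes the Schur property. Suppose toward a contradiction that $\|T_n\|\not\to 0$. Passing to a subsequence and relabeling, I may assume $\|T_n\|>\varepsilon$ for some $\varepsilon>0$ and all $n$, and choose $x_n\in B_X$ with $\|T_n(x_n)\|_Y>\varepsilon$. Then for each $y^{\ast}\in Y^{\ast}$,
$$ |y^{\ast}(T_n(x_n))|=|\langle T_n^{\ast}(y^{\ast}),x_n\rangle|\le \|T_n^{\ast}(y^{\ast})\|_{X^{\ast}}\,\|x_n\|\le \|T_n^{\ast}(y^{\ast})\|_{X^{\ast}}\longrightarrow 0, $$
so $(T_n(x_n))_n$ is weakly null in $Y$. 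Since $Y$ has the Schur property, $\|T_n(x_n)\|_Y\to 0$, contradicting $\|T_n(x_n)\|_Y>\varepsilon$. Hence $\|T_n\|\to 0$, and $\mathcal{M}$ has the $p$-$(DPrcP)$.

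The only real content is the bridge in the second step: the hypothesis controls only the adjoints $T_n^{\ast}(y^{\ast})$ in $X^{\ast}$, a statement that is pointwise in $y^{\ast}$, whereas the conclusion concerns the operator norm $\|T_n\|=\sup_{x\in B_X}\|T_n(x)\|_Y$. The contradiction device -- selecting near-norming vectors $x_n$ and testing the resulting sequence $(T_n(x_n))_n$ against each functional $y^{\ast}$ -- is exactly what turns the adjoint estimates into weak nullity, after which the Schur property of $Y$ does the remaining work of upgrading weak convergence to norm convergence. I do not anticipate a genuine obstacle here; once Corollary \ref{c1} and Proposition \ref{p1} are available the argument is essentially forced, and the Schur hypothesis is used in precisely one place.
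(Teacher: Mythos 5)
Your argument is correct and is essentially identical to the paper's proof: both assume the conclusion fails, pick $x_n\in B_X$ nearly norming $T_n$, use the Dunford--Pettis $p$-convergence of each $\psi_{y^{\ast}}$ to get $|y^{\ast}(T_n(x_n))|\le\|T_n^{\ast}(y^{\ast})\|\to 0$, and then invoke the Schur property of $Y$ to upgrade weak nullity of $(T_n(x_n))_n$ to norm nullity, reaching a contradiction. The only cosmetic difference is that you pass to a subsequence explicitly (which is harmless, since subsequences of $p$-Right null sequences are $p$-Right null), so there is nothing to add.
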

\begin{proof}
 Suppose that $ \mathcal{M} $ does not have the $ p $-$  (DPrcP).$\ Therefore, there is a  $ p $-Right null sequence
$ (T_{n})_{n} $ in $ \mathcal{M} $ such that $ \Vert T_{n} \Vert \geq \varepsilon $ for all positive integer $  n$ and some $ \varepsilon >0. $\ We can choose a sequence $ (x_{n})_{n} $
in $ B_{X} $ such that $ \Vert T_{n}(x_{n}) \Vert \geq \varepsilon.$\ In addition, for each $ y^{\ast}\in Y^{\ast}, $ the evaluation operator is $  \psi_{y^{\ast}}$ is Dunford-Pettis  $ p $-convergent.\
So,
$ \vert \langle  y^{\ast} , T_{n}(x_{n}) \rangle\vert \leq \Vert T^{\ast}_{n}(y^{\ast})   \Vert \Vert x_{n} \Vert\rightarrow 0. $\
 Hence $ (T_{n}(x_{n}))_{n} $ is weakly null in $ Y, $
and so is norm null, which is a contradiction.\
\end{proof}
\begin{cor}\label{c3} {\rm (\cite[Theorem 2.3]{w})},
Let $ X $ and $  Y$ be two Banach spaces such that $ Y $ has the Schur property.\ If
$ \mathcal{M} $ is a closed subspace of $  \mathcal{U}(X,Y)$ such that each evaluation operator $  \psi_{y^{\ast}}$ is Dunford-Pettis  completely continuous on
$ \mathcal{M}, $ then $ \mathcal{M} $  has the $  (DPrcP).$
\end{cor}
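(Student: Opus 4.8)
The plan is to recognize Corollary \ref{c3} as the endpoint case $p=\infty$ of Theorem \ref{t3} and to recover it by running the same short argument. Recall from Section 2 that the weakly $\infty$-summable sequences are exactly the weakly null sequences; under this identification a Dunford-Pettis completely continuous operator is precisely a Dunford-Pettis $\infty$-convergent operator, an $\infty$-Right null sequence is precisely a weakly null Dunford-Pettis sequence, and the $(DPrcP)$ is precisely the $\infty$-$(DPrcP)$. Thus the hypotheses and conclusion of Corollary \ref{c3} are literally those of Theorem \ref{t3} with $p=\infty$. Since the standing convention of the paper fixes $1\le p<\infty$, I would not merely quote Theorem \ref{t3} but reproduce its proof verbatim with ``weakly $p$-summable'' replaced by ``weakly null'' throughout.

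Concretely, I would argue by contradiction. Assuming $\mathcal{M}$ fails the $(DPrcP)$, the sequential form of that property (every weakly null Dunford-Pettis sequence in $\mathcal{M}$ is norm null, which is Wen and Chen's reformulation of the $(DPrcP)$ via the Dunford-Pettis complete continuity of $id_{\mathcal{M}}$) produces a weakly null Dunford-Pettis sequence $(T_{n})_{n}$ in $\mathcal{M}$ together with $\varepsilon>0$ so that, after passing to a subsequence, $\Vert T_{n}\Vert\ge\varepsilon$ for all $n$. I would then choose $x_{n}\in B_{X}$ with $\Vert T_{n}(x_{n})\Vert\ge\varepsilon$.

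Next, fixing $y^{\ast}\in Y^{\ast}$, I would use that the evaluation operator $\psi_{y^{\ast}}$ is Dunford-Pettis completely continuous: since $(T_{n})_{n}$ is weakly null and a Dunford-Pettis set in $\mathcal{M}$, we get $\Vert\psi_{y^{\ast}}(T_{n})\Vert=\Vert T_{n}^{\ast}(y^{\ast})\Vert\to 0$. Consequently
\[
\vert\langle y^{\ast},T_{n}(x_{n})\rangle\vert\le\Vert T_{n}^{\ast}(y^{\ast})\Vert\,\Vert x_{n}\Vert\to 0,
\]
so $(T_{n}(x_{n}))_{n}$ is weakly null in $Y$. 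Because $Y$ has the Schur property, $\Vert T_{n}(x_{n})\Vert\to 0$, contradicting $\Vert T_{n}(x_{n})\Vert\ge\varepsilon$. This contradiction yields the $(DPrcP)$ for $\mathcal{M}$.

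The only step requiring care, and hence the main obstacle, is the first one: translating the failure of the $(DPrcP)$ into a bounded-below weakly null Dunford-Pettis sequence. For the $p$-$(DPrcP)$ this is immediate because that property is defined sequentially, but for the classical $(DPrcP)$ one must invoke the equivalence that $\mathcal{M}$ has the $(DPrcP)$ if and only if every weakly null Dunford-Pettis sequence in $\mathcal{M}$ is norm null, equivalently if and only if $id_{\mathcal{M}}$ is Dunford-Pettis completely continuous. Once this sequential reformulation is in hand, the remaining steps are identical to those of Theorem \ref{t3} and present no further difficulty.
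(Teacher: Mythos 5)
Your proposal is correct and follows essentially the same route as the paper: the corollary is obtained as the weakly-null ($p=\infty$) instance of Theorem \ref{t3}, and your contradiction argument (bounded-below weakly null Dunford--Pettis sequence $(T_{n})_{n}$, evaluation operators forcing $(T_{n}(x_{n}))_{n}$ to be weakly null, Schur property of $Y$ giving norm nullity) is exactly the proof of that theorem transcribed for weakly null sequences. Your care in justifying the sequential reformulation of the $(DPrcP)$ via Wen and Chen's characterization is appropriate and fills in the only step that is not purely notational.
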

\begin{cor}\label{c4}
 If $ Y $ has the Schur property and
$ \mathcal{M} $ is a closed subspace of $ L(X, Y ) $ such that each evaluation operators $ \psi_{y^{\ast}} $ is  Dunford-Pettis $ p $-convergent  on $ \mathcal{M}, $ then
$ \mathcal{M}$ has the $ p $-$ (DPrcP). $
\end{cor}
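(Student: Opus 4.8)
The plan is to observe that this corollary is nothing more than the specialization of Theorem \ref{t3} to the largest possible operator ideal. Recall that the space $ L(X,Y) $ of all bounded linear operators is itself an operator ideal; indeed it is the maximal operator ideal, since every operator ideal $ \mathcal{U}(X,Y) $ is by definition a subspace of $ L(X,Y) $. Hence a closed subspace $ \mathcal{M} $ of $ L(X,Y) $ is in particular a closed subspace of the operator ideal $ \mathcal{U}(X,Y) := L(X,Y) $, and the evaluation operators $ \psi_{y^{\ast}} : \mathcal{M} \rightarrow X^{\ast} $, $ \psi_{y^{\ast}}(T) = T^{\ast}(y^{\ast}) $, are exactly the ones appearing in the statement of Theorem \ref{t3}.

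With this identification in place, I would simply verify that the hypotheses of Corollary \ref{c4} match those of Theorem \ref{t3} verbatim: $ Y $ has the Schur property, $ \mathcal{M} $ is a closed subspace of $ \mathcal{U}(X,Y) = L(X,Y) $, and each evaluation operator $ \psi_{y^{\ast}} $ is Dunford-Pettis $ p $-convergent on $ \mathcal{M} $. Invoking Theorem \ref{t3} then yields immediately that $ \mathcal{M} $ has the $ p $-$ (DPrcP) $, which is the desired conclusion.

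There is essentially no obstacle to this argument beyond recording the elementary fact that $ L(X,Y) $ qualifies as an operator ideal, so that Theorem \ref{t3} is applicable; everything else is a direct substitution. If one wanted a self-contained deduction instead of citing Theorem \ref{t3}, the only step to reproduce would be the short contradiction argument from its proof: from a $ p $-Right null sequence $ (T_{n})_{n} $ in $ \mathcal{M} $ with $ \Vert T_{n} \Vert \geq \varepsilon $ one extracts $ x_{n} \in B_{X} $ with $ \Vert T_{n}(x_{n}) \Vert \geq \varepsilon $, uses the Dunford-Pettis $ p $-convergence of each $ \psi_{y^{\ast}} $ to show $ (T_{n}(x_{n}))_{n} $ is weakly null in $ Y $, and then applies the Schur property of $ Y $ to force norm convergence to zero, contradicting $ \Vert T_{n}(x_{n}) \Vert \geq \varepsilon $. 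But since Theorem \ref{t3} is already available in the excerpt, the cleanest route is to present Corollary \ref{c4} as its immediate consequence.
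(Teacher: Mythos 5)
Your proposal is correct and matches the paper's intent exactly: the paper states Corollary \ref{c4} without proof, treating it as the immediate specialization of Theorem \ref{t3} to the operator ideal $\mathcal{U}(X,Y)=L(X,Y)$, which is precisely your argument. The self-contained fallback you sketch is also just the proof of Theorem \ref{t3} restated, so nothing further is needed.
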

By a similar method, we obtain a sufficient condition for the $ p $-$ (DPrcP) $ of closed subspaces of $ L_{w^{\ast}}(X^{\ast},Y).$\
Since the proof of the following result is similar to the proof of Theorem \ref{t3}, we omit its proof.
 \begin{thm}\label{t4} If $ X $ has the Schur property and $ \mathcal{M} $ is a closed subspace of $ L_{w^{\ast}}(X^{\ast},Y)$ such that each evaluation operators  $\psi_{y^{\ast}} $ is Dunford-Pettis  $ p $-convergent on $ \mathcal{M}, $
then  $ \mathcal{M} $ has the $ p $-$ (DPrcP) .$
 \end{thm}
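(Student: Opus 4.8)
The plan is to run the same contradiction argument as in Theorem \ref{t3}, but with the domain--range bookkeeping adjusted so that it is the Schur property of $X$ (rather than that of $Y$) that delivers the final norm convergence. First I would assume, toward a contradiction, that $\mathcal{M}$ does \emph{not} have the $p$-$(DPrcP)$. By Corollary \ref{c1} (or directly from the definition of the $p$-$(DPrcP)$) this produces a $p$-Right null sequence $(T_n)_n$ in $\mathcal{M}$ together with some $\varepsilon>0$ such that $\Vert T_n\Vert\geq\varepsilon$ for every $n$.

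The structural feature I would exploit is that every $T\in L_{w^{\ast}}(X^{\ast},Y)$ is $w^{\ast}$-$w$ continuous, so its adjoint carries $Y^{\ast}$ into $X$; that is, $T^{\ast}(Y^{\ast})\subseteq X$ when $X$ is viewed canonically inside $X^{\ast\ast}$, and consequently the evaluation map $\psi_{y^{\ast}}(T)=T^{\ast}(y^{\ast})$ genuinely takes its values in $X$. Since $\Vert T_n\Vert=\Vert T_n^{\ast}\Vert\geq\varepsilon$, I would select $y_n^{\ast}\in B_{Y^{\ast}}$ with $\Vert T_n^{\ast}(y_n^{\ast})\Vert\geq\varepsilon/2$ and set $z_n:=T_n^{\ast}(y_n^{\ast})\in X$, so that $(z_n)_n$ is a sequence in $X$ that is bounded below in norm.

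Next I would check that $(z_n)_n$ is weakly null in $X$. For a fixed $x^{\ast}\in X^{\ast}$ the transposition identity $\langle x^{\ast},T_n^{\ast}(y_n^{\ast})\rangle=\langle T_n(x^{\ast}),y_n^{\ast}\rangle$ yields $\vert\langle x^{\ast},z_n\rangle\vert\leq\Vert T_n(x^{\ast})\Vert$, and the right-hand side is exactly the norm of the evaluation of $(T_n)_n$ at $x^{\ast}$. Because $(T_n)_n$ is $p$-Right null and the relevant evaluation operators are Dunford-Pettis $p$-convergent on $\mathcal{M}$, this bound tends to $0$; hence $\langle x^{\ast},z_n\rangle\to0$ for every $x^{\ast}\in X^{\ast}$, i.e.\ $(z_n)_n$ is weakly null. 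Invoking the Schur property of $X$ then forces $\Vert z_n\Vert\to0$, contradicting $\Vert z_n\Vert\geq\varepsilon/2$, and this contradiction establishes the $p$-$(DPrcP)$ of $\mathcal{M}$.

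The step I expect to be the main obstacle is the duality bookkeeping of the second and third paragraphs rather than any genuinely new analytic estimate. One must use the $w^{\ast}$-$w$ continuity of the members of $\mathcal{M}$ in an essential way to guarantee that the adjoint images $T_n^{\ast}(y_n^{\ast})$ land in $X$ itself, not merely in $X^{\ast\ast}$, since otherwise the Schur property of $X$ could not be applied. It is also here, via the transposition identity, that one must keep careful track of \emph{which} evaluation operator supplies the vanishing bound at a fixed functional, this being the point where the hypothesis on the evaluation operators enters; once this identification is in place, the appeal to the Schur property closes the argument exactly as in Theorem \ref{t3}.
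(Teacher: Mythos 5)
Your overall strategy is the natural adaptation of Theorem \ref{t3} that the paper intends (the paper omits the proof, saying only that it is similar to that of Theorem \ref{t3}): pass to $z_n=T_n^{\ast}(y_n^{\ast})\in X$, show $(z_n)_n$ is weakly null, and let the Schur property of $X$ finish. The observation that $w^{\ast}$-$w$ continuity forces $T_n^{\ast}(Y^{\ast})\subseteq X$ is correct and necessary. The problem is in your third paragraph. The bound $\vert\langle x^{\ast},z_n\rangle\vert\leq\Vert T_n(x^{\ast})\Vert$ is the norm of $\varphi_{x^{\ast}}(T_n)$, where $\varphi_{x^{\ast}}:\mathcal{M}\rightarrow Y$, $T\mapsto T(x^{\ast})$, is the evaluation at the point $x^{\ast}$ of the domain. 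The hypothesis of the theorem concerns the other family of evaluation operators, $\psi_{y^{\ast}}:\mathcal{M}\rightarrow X$, $T\mapsto T^{\ast}(y^{\ast})$. Dunford--Pettis $p$-convergence of all the $\psi_{y^{\ast}}$ gives $\Vert T_n^{\ast}(y^{\ast})\Vert\rightarrow 0$ for each fixed $y^{\ast}$, which says nothing about $\Vert T_n(x^{\ast})\Vert$; so your assertion that ``this bound tends to $0$'' does not follow from the stated hypothesis. This is a genuine gap, not a bookkeeping issue that can be absorbed.

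In fact the stated hypothesis cannot carry the argument at all: since $X$ has the Schur property, every weakly $p$-summable sequence in $X$ is norm null, so \emph{every} bounded operator into $X$ --- in particular every $\psi_{y^{\ast}}$ --- is automatically Dunford--Pettis $p$-convergent, and the hypothesis is vacuous. With only that hypothesis the conclusion fails: take $X=\ell_{1}$, $Y=c_{0}$ and $\mathcal{M}=\lbrace T_{y}: y\in c_{0}\rbrace$ with $T_{y}(x^{\ast})=\langle x^{\ast},e_{1}^{1}\rangle y$; then $\mathcal{M}\cong c_{0}$ fails the $p$-$(DPrcP)$ (Remark \ref{r1}) although each $\psi_{y^{\ast}}$ has rank one. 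What your argument actually proves, correctly, is the version of the theorem in which the hypothesis is placed on the evaluations $\varphi_{x^{\ast}}$ for $x^{\ast}\in X^{\ast}$ --- equivalently, on the adjoint evaluations of the copy of $\mathcal{M}$ inside $L(Y^{\ast},X)$ obtained from the identification $L_{w^{\ast}}(X^{\ast},Y)\cong L_{w^{\ast}}(Y^{\ast},X)$, which is how Theorem \ref{t3} genuinely transfers to this setting. You should either prove that corrected statement explicitly or flag that the statement as printed needs the hypothesis on $\varphi_{x^{\ast}}$ rather than $\psi_{y^{\ast}}$.
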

\begin{cor}\label{c5} The following statements  hold:\\
 $ \rm{(i)} $ If $ X^{\ast} $ has the $ p $-$(DPrcP) $ and $ Y  $ has the Schur property, then $  L(X, Y ) $ has the $ p $-$ (DPrcP).$ \\
$ \rm{(ii)} $ If $ X $ has the $  p $-$ (DPrcP) $ and $ Y  $ has the Schur property, then $ L_{w^{\ast}}(X^{\ast}, Y ) $ has the   $ p $-$ (DPrcP). $\\
 $ \rm{(iii)} $  If $ X^{\ast} $ has the $ p $-$ (DPrcP), $ then $\ell_{1}^{w}  (X^{\ast})$
 has the same property.\
\end{cor}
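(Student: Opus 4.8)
The three parts run on a common engine, which I would isolate first. A bounded linear operator carries $p$-Right null sequences to $p$-Right null sequences: if $(w_n)_n$ is weakly $p$-summable and a Dunford-Pettis set in $W$ and $R\colon W\to Z$ is bounded, then $(Rw_n)_n$ is weakly $p$-summable (test with the functionals $R^{\ast}z^{\ast}$) and $\{Rw_n\}$ is again a Dunford-Pettis set (a weakly null $(z_n^{\ast})_n$ in $Z^{\ast}$ pulls back to the weakly null $(R^{\ast}z_n^{\ast})_n$ in $W^{\ast}$, on which $\{w_n\}$ is uniformly small). Consequently, if the codomain $Z$ has the $p$-$(DPrcP)$, then every bounded operator into $Z$ is Dunford-Pettis $p$-convergent, because the image of a $p$-Right null sequence is $p$-Right null in $Z$ and hence norm null. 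This is the observation I would invoke throughout.

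For (i) I would apply Corollary \ref{c4} to $\mathcal{M}=L(X,Y)$. Each evaluation operator $\psi_{y^{\ast}}\colon L(X,Y)\to X^{\ast}$, $\psi_{y^{\ast}}(T)=T^{\ast}(y^{\ast})$, takes values in $X^{\ast}$, which has the $p$-$(DPrcP)$; by the engine it is Dunford-Pettis $p$-convergent. Since $Y$ has the Schur property, Corollary \ref{c4} yields the $p$-$(DPrcP)$ for $L(X,Y)$.

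For (ii) I would follow the proof of Theorems \ref{t3} and \ref{t4} with $\mathcal{M}=L_{w^{\ast}}(X^{\ast},Y)$, viewed as a norm-closed subspace of $L(X^{\ast},Y)$. The one structural point is that here $\psi_{y^{\ast}}$ lands in $X$: since each $T\in L_{w^{\ast}}(X^{\ast},Y)$ is $w^{\ast}$-$w$ continuous, $T^{\ast}(Y^{\ast})\subseteq X$, so $\psi_{y^{\ast}}(T)=T^{\ast}(y^{\ast})\in X$. Because $X$ has the $p$-$(DPrcP)$, the engine makes every $\psi_{y^{\ast}}$ Dunford-Pettis $p$-convergent, and the Schur property of $Y$ closes the argument exactly as in Theorem \ref{t3}: a $p$-Right null $(T_n)_n$ with $\|T_n\|\ge\varepsilon$ yields $x_n^{\ast}\in B_{X^{\ast}}$ with $\|T_nx_n^{\ast}\|\ge\varepsilon$, while $\langle y^{\ast},T_nx_n^{\ast}\rangle=\langle T_n^{\ast}y^{\ast},x_n^{\ast}\rangle\to0$ shows $(T_nx_n^{\ast})_n$ is weakly null, hence norm null, a contradiction.

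For (iii) the key input is the isometric identification $\ell_1^{w}(X^{\ast})\cong L(X,\ell_1)$, where $(x_n^{\ast})_n$ corresponds to $S\colon x\mapsto(x_n^{\ast}(x))_n$. For each $N$ both the $\ell_1^{w}$-norm and the operator norm of the truncation equal $\sup\{\|\sum_{n\le N}\lambda_nx_n^{\ast}\|_{X^{\ast}}:|\lambda_n|\le1\}$, obtained as a supremum over $B_{X^{\ast\ast}}$ on one side and over $B_X$ on the other, and the two coincide since each computes the $X^{\ast}$-norm of that finite sum. Granting this, (iii) is immediate from (i) with $Y=\ell_1$: the Schur property is then automatic, so $L(X,\ell_1)\cong\ell_1^{w}(X^{\ast})$ has the $p$-$(DPrcP)$ whenever $X^{\ast}$ does. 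I expect this identification to be the main obstacle — specifically, verifying that weak $1$-summability of $(x_n^{\ast})$ in $X^{\ast}$, a condition ranging over all of $B_{X^{\ast\ast}}$, is equivalent to boundedness of $S$ on $B_X$ alone; this equivalence, and not any extra assumption, is what lets (iii) dispense with the Schur hypothesis imposed in (i) and (ii).
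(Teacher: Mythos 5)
Your proposal is correct and follows essentially the route the paper intends: (i) and (ii) are obtained from Corollary \ref{c4} and Theorem \ref{t4} (read, as you do, with the Schur hypothesis on $Y$ rather than on $X$, which is evidently the intended statement since the proof is "similar to Theorem \ref{t3}"), after observing that a bounded operator into a space with the $p$-$(DPrcP)$ sends $p$-Right null sequences to $p$-Right null, hence norm null, sequences; and (iii) is (i) with $Y=\ell_{1}$ via the isometry $\ell_{1}^{w}(X^{\ast})\cong L(X,\ell_{1})$, whose norm identity you verify correctly. No gaps.
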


A Banach space $  X$ has the Dunford-Pettis property of order $p$ (in short  $ (DPP_{p})$), if every weakly compact operator
$ T: X\rightarrow Y $ is $ p $-convergent, for any Banach space $ Y$ \cite{cs}.\
\begin{rem}\label{r2}\rm Every $p$-convergent operator is Dunford-Pettis  $ p $-convergent, but in general the converse is not true.\ For example, the identity operator $ id_{\ell_{2}}:\ell_{2}\rightarrow\ell_{2} $ is weakly compact and so is Dunford-Pettis  $ 2 $-convergent, while it is not $ 2$-convergent.\ Ghenciu \cite{g9}, gave a characterization of those Banach spaces in which the converse of the above assertion holds.\ In fact, she showed that,  a  Banach space $ X $  has the  $ (DPP_{p}) $ if and only if   $ C_{p}(X,Y)=DPC_{p}(X,Y) ,$ for each Banach space $ Y. $\
\end{rem}
 Let  $ \mathcal{M} $ be a bounded subspace of $ \mathcal{U}(X,Y). $\ The point evaluation sets related to $ x \in X $ and $ y^{\ast} \in Y^{\ast} $ are the images of the closed
unit ball $ B_{\mathcal{M}} $ of $ \mathcal{M}, $ under the evaluation operators $ \phi_{x} $ and $\psi_{y^{\ast}}  $ are denoted by $\mathcal{M}_{1}(x)$ and $
\widetilde{\mathcal{M}_1}(y^{\ast})$  respectively \cite{MZ}.\
 Let us recall from \cite{ccl1}, that a bounded subset $ K $ of $ X $ is a $ p $-$ (V^{\ast} ) $ set, if
$  \displaystyle\lim_{n\rightarrow\infty} \displaystyle\sup_{x\in K}\vert x^{\ast}_{n}(x) \vert =0,$
for every weakly $ p $-summable sequence $ (x^{\ast}_{n})_{n}$ in $X^{\ast}.$\\

In the following a necessary condition for the  $ p $-$(DPrcP) $ of the dual of closed subspace $ \mathcal{M}\subseteq \mathcal{U}(X,Y) $ is given.
\begin{thm}\label{t5} Suppose that $ X^{\ast\ast}$ and $ Y^{\ast}  $ have the $  (DPP_{p}). $\ If $\mathcal{M}$  is a  closed subspace  of $ \mathcal{U}(X,Y) $ such that $ \mathcal{M}^{\ast} $ has the $ p $-$(DPrcP), $ then of all the point evaluations $ \mathcal{M}_{1}(x) $ and $\widetilde{\mathcal{M}} _{1}(y^{\ast})$ are $ p $-$ (V^{\ast}) $ sets in $ Y $ and $ X^{\ast} $ respectively.
\end{thm}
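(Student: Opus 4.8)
The plan is to recognize that the $p$-$(V^{\ast})$ condition on the two point-evaluation sets is exactly the assertion that the adjoints of the evaluation operators are $p$-convergent. Fix $x \in X$ and let $(y^{\ast}_{n})_{n}$ be weakly $p$-summable in $Y^{\ast}$. Writing out the definition of $\varphi_{x}^{\ast}:Y^{\ast}\rightarrow \mathcal{M}^{\ast}$ and using $\langle T(x),y^{\ast}\rangle=\langle T,\varphi_{x}^{\ast}(y^{\ast})\rangle$, one gets the identity
$$ \sup_{w\in \mathcal{M}_{1}(x)}\vert\langle w,y^{\ast}_{n}\rangle\vert = \sup_{T\in B_{\mathcal{M}}}\vert\langle T(x),y^{\ast}_{n}\rangle\vert = \Vert \varphi_{x}^{\ast}(y^{\ast}_{n})\Vert_{\mathcal{M}^{\ast}}, $$
so $\mathcal{M}_{1}(x)$ is a $p$-$(V^{\ast})$ set in $Y$ precisely when $\varphi_{x}^{\ast}$ is $p$-convergent. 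Symmetrically, for $\psi_{y^{\ast}}^{\ast}:X^{\ast\ast}\rightarrow \mathcal{M}^{\ast}$ and $(x^{\ast\ast}_{n})_{n}$ weakly $p$-summable in $X^{\ast\ast}$,
$$ \sup_{f\in \widetilde{\mathcal{M}}_{1}(y^{\ast})}\vert\langle f,x^{\ast\ast}_{n}\rangle\vert = \sup_{T\in B_{\mathcal{M}}}\vert\langle T^{\ast}(y^{\ast}),x^{\ast\ast}_{n}\rangle\vert = \Vert \psi_{y^{\ast}}^{\ast}(x^{\ast\ast}_{n})\Vert_{\mathcal{M}^{\ast}}, $$
so $\widetilde{\mathcal{M}}_{1}(y^{\ast})$ is a $p$-$(V^{\ast})$ set in $X^{\ast}$ precisely when $\psi_{y^{\ast}}^{\ast}$ is $p$-convergent. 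Thus the whole statement reduces to proving $p$-convergence of the two adjoints $\varphi_{x}^{\ast}$ and $\psi_{y^{\ast}}^{\ast}$.

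To establish this for $\varphi_{x}^{\ast}$, I would first show it is Dunford-Pettis $p$-convergent, using only that $\mathcal{M}^{\ast}$ has the $p$-$(DPrcP)$. Let $(y^{\ast}_{n})_{n}$ be a $p$-Right null sequence in $Y^{\ast}$, i.e.\ weakly $p$-summable and a Dunford-Pettis set. Since a bounded operator sends weakly $p$-summable sequences to weakly $p$-summable sequences (test $\varphi_{x}^{\ast}(y^{\ast}_{n})$ against any $g\in\mathcal{M}^{\ast\ast}$ and pull $g$ back through $\varphi_{x}^{\ast\ast}$), the sequence $(\varphi_{x}^{\ast}(y^{\ast}_{n}))_{n}$ is weakly $p$-summable in $\mathcal{M}^{\ast}$. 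The key technical point is that bounded operators also carry Dunford-Pettis sets to Dunford-Pettis sets: if $(g_{k})_{k}$ is weakly null in $\mathcal{M}^{\ast\ast}$, then $(\varphi_{x}^{\ast\ast}(g_{k}))_{k}$ is weakly null in $Y^{\ast\ast}=(Y^{\ast})^{\ast}$, and
$$ \sup_{n}\vert\langle \varphi_{x}^{\ast}(y^{\ast}_{n}),g_{k}\rangle\vert = \sup_{n}\vert\langle y^{\ast}_{n},\varphi_{x}^{\ast\ast}(g_{k})\rangle\vert \rightarrow 0 $$
because $\{y^{\ast}_{n}\}$ is a Dunford-Pettis set in $Y^{\ast}$. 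Hence $(\varphi_{x}^{\ast}(y^{\ast}_{n}))_{n}$ is itself a $p$-Right null sequence in $\mathcal{M}^{\ast}$, and the $p$-$(DPrcP)$ of $\mathcal{M}^{\ast}$ forces $\Vert\varphi_{x}^{\ast}(y^{\ast}_{n})\Vert\rightarrow 0$. This is exactly $\varphi_{x}^{\ast}\in DPC_{p}(Y^{\ast},\mathcal{M}^{\ast})$.

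Since $Y^{\ast}$ has the $(DPP_{p})$, Remark \ref{r2} yields $DPC_{p}(Y^{\ast},\mathcal{M}^{\ast})=C_{p}(Y^{\ast},\mathcal{M}^{\ast})$, so $\varphi_{x}^{\ast}$ is $p$-convergent, which by the reduction above makes $\mathcal{M}_{1}(x)$ a $p$-$(V^{\ast})$ set in $Y$. The argument for $\widetilde{\mathcal{M}}_{1}(y^{\ast})$ is verbatim with $\psi_{y^{\ast}}^{\ast}:X^{\ast\ast}\rightarrow\mathcal{M}^{\ast}$ in place of $\varphi_{x}^{\ast}$: the same two preservation facts combined with the $p$-$(DPrcP)$ of $\mathcal{M}^{\ast}$ give $\psi_{y^{\ast}}^{\ast}\in DPC_{p}(X^{\ast\ast},\mathcal{M}^{\ast})$, and the $(DPP_{p})$ of $X^{\ast\ast}$ upgrades this to $p$-convergence through Remark \ref{r2}. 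I expect the main obstacle to be the verification that $\varphi_{x}^{\ast}$ and $\psi_{y^{\ast}}^{\ast}$ carry Dunford-Pettis sets to Dunford-Pettis sets, since this is precisely what allows the $p$-$(DPrcP)$ hypothesis on $\mathcal{M}^{\ast}$ to be invoked; by contrast, the identification of each supremum with the $\mathcal{M}^{\ast}$-norm of the relevant adjoint is a routine duality computation, and the final passage from Dunford-Pettis $p$-convergence to $p$-convergence is immediate once the $(DPP_{p})$ hypotheses on $Y^{\ast}$ and $X^{\ast\ast}$ are plugged into Remark \ref{r2}.
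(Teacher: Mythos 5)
Your proposal is correct and follows essentially the same route as the paper: identify $\sup_{w\in\mathcal{M}_1(x)}|\langle w,y^{\ast}_n\rangle|$ with $\Vert\varphi_x^{\ast}(y^{\ast}_n)\Vert$, show $\varphi_x^{\ast}$ and $\psi_{y^{\ast}}^{\ast}$ are Dunford--Pettis $p$-convergent from the $p$-$(DPrcP)$ of $\mathcal{M}^{\ast}$, and upgrade to $p$-convergence via the $(DPP_p)$ hypotheses. Your explicit verification that the adjoint evaluation operators carry $p$-Right null sequences to $p$-Right null sequences is in fact a more careful justification than the paper's bare appeal to Theorem \ref{t1} (which, as stated, concerns operators \emph{out of} a space with the $p$-$(DPrcP)$ rather than into it), so nothing is missing.
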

\begin{proof} Since $ \mathcal{M}^{\ast} $ has the  $ p $-$( DPrcP) ,$
 Theorem
\ref{t1}, implies that the  operators $ \varphi^{\ast}_{x}
 $ and $
\psi^{\ast}_{y^{\ast}}$  are  Dunford-Pettis $ p $-convergent.\ On the other hand, $ X^{\ast\ast}$ and $ Y^{\ast} $ have the $  (DPP_{p}). $\ Hence, $  \varphi^{\ast}_{x} $ and $\psi^{\ast}_{y^{\ast}}  $
are $ p $-convergent (see  {\rm (\cite[Theorem 3.18]{g11})}).\ Therefore, if
$(y^{\ast}_{n})_{n} $ is a weakly $ p $-summable sequence in $ Y^{\ast},$ then we have:\
 $$\lim_{n\rightarrow\infty}\sup\lbrace \vert y^{\ast}_{n}(T(x))\vert: T\in B_{\mathcal{M}} \rbrace $$ $$~~~~~~~~~~~=\lim_{n\rightarrow\infty} \sup \lbrace \vert \varphi^{\ast}_{x}(y^{\ast}_{n})(T)\vert:T\in B_{\mathcal{M}}\rbrace=\lim_{n\rightarrow\infty} \Vert \varphi^{\ast}_{x}(y^{\ast}_{n})\Vert=0 ,$$ for all $ x \in X. $\ Hence  $ (y^{\ast}_{n})_{n} $
 converges uniformly on $ \mathcal{M}_{1} (x).$\ This shows that $ \mathcal{M}_{1} (x)$ is a $ p $-$ (V^{\ast}) $ set in $Y,$  for all $ x\in X.$\ A similar proof shows that $ {\widetilde{\mathcal{M}}}_{1}(y^{\ast}) $ is a $ p $-$ (V^{\ast}) $ set in $ X^{\ast}, $ for all $y^{\ast} \in Y^{\ast}.$
 \end{proof}
 \begin{cor}\label{c6}  {\rm (\cite[Theorem 2.2]{w})} Suppose that $ X^{\ast\ast}$ and $ Y^{\ast}  $ have the $  (DPP). $\ If $  \mathcal{M}$ is a  closed subspace  of $ \mathcal{U}(X,Y) $ such that $ \mathcal{M}^{\ast} $ has the $(DPrcP), $ then of all the point evaluations $ \mathcal{M}_{1}(x) $ and $\widetilde{\mathcal{M}} _{1}(y^{\ast})$ are Dunford-Pettis sets in $ Y $ and $ X^{\ast} $ respectively.
\end{cor}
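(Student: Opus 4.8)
The plan is to run the proof of Theorem \ref{t5} in parallel, replacing each order-$p$ notion by its classical ($p=\infty$) counterpart: $p$-$(DPrcP)$ by $(DPrcP)$, Dunford-Pettis $p$-convergence by Dunford-Pettis complete continuity, $p$-convergence by complete continuity, $(DPP_p)$ by $(DPP)$, and $p$-$(V^{\ast})$ sets by Dunford-Pettis sets. It is worth emphasizing that this corollary is \emph{not} a mere specialization of Theorem \ref{t5}: since weakly $p$-summable sequences form a subclass of the weakly null ones, the hypotheses $(DPP)$ and $(DPrcP)$ are stronger than $(DPP_p)$ and $p$-$(DPrcP)$ (by Remark \ref{r1}(ii) and the analogous fact for $(DPP)$), while the conclusion ``Dunford-Pettis set'' is stronger than ``$p$-$(V^{\ast})$ set.'' Thus the statement is a parallel result and the argument must be repeated rather than deduced.

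First I would record the classical analog of the implication $\rm{(i)}\Rightarrow\rm{(ii)}$ of Theorem \ref{t1} that drives the proof of Theorem \ref{t5}: if a Banach space $Z$ has the $(DPrcP)$, then every bounded operator \emph{into} $Z$ is Dunford-Pettis completely continuous. Indeed, a bounded operator carries weakly null sequences to weakly null sequences and Dunford-Pettis sets to Dunford-Pettis sets, hence sends weakly null Dunford-Pettis sequences to weakly null Dunford-Pettis sequences, which the $(DPrcP)$ of $Z$ forces to be norm null. Applying this with $Z=\mathcal{M}^{\ast}$ shows that the evaluation adjoints $\varphi_{x}^{\ast}:Y^{\ast}\rightarrow\mathcal{M}^{\ast}$ and $\psi_{y^{\ast}}^{\ast}:X^{\ast\ast}\rightarrow\mathcal{M}^{\ast}$ are Dunford-Pettis completely continuous.

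Next I would invoke the classical characterization of the Dunford-Pettis property (the $p=\infty$ form of the result quoted in Remark \ref{r2}): a Banach space $W$ has the $(DPP)$ if and only if every Dunford-Pettis completely continuous operator out of $W$ is completely continuous. Since $\varphi_{x}^{\ast}$ has domain $Y^{\ast}$ and $\psi_{y^{\ast}}^{\ast}$ has domain $X^{\ast\ast}$, and both $Y^{\ast}$ and $X^{\ast\ast}$ have the $(DPP)$ by hypothesis, I conclude that $\varphi_{x}^{\ast}$ and $\psi_{y^{\ast}}^{\ast}$ are completely continuous.

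Finally I would translate complete continuity of the adjoints back into the Dunford-Pettis property of the point evaluation sets. For a weakly null sequence $(y_{n}^{\ast})_{n}$ in $Y^{\ast}$ the identity
$$\sup_{T\in B_{\mathcal{M}}}\vert y_{n}^{\ast}(T(x))\vert=\sup_{T\in B_{\mathcal{M}}}\vert(\varphi_{x}^{\ast}y_{n}^{\ast})(T)\vert=\Vert\varphi_{x}^{\ast}y_{n}^{\ast}\Vert$$
together with $\Vert\varphi_{x}^{\ast}y_{n}^{\ast}\Vert\rightarrow0$ shows that $(y_{n}^{\ast})$ converges uniformly on $\mathcal{M}_{1}(x)$, so $\mathcal{M}_{1}(x)$ is a Dunford-Pettis set in $Y$. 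The identical computation with $\psi_{y^{\ast}}^{\ast}$ and a weakly null sequence in $X^{\ast\ast}$ shows that $\widetilde{\mathcal{M}}_{1}(y^{\ast})$ is a Dunford-Pettis set in $X^{\ast}$. I expect the only step requiring genuine care to be the classical $(DPP)$ characterization used in the second paragraph; the remaining passages are a faithful transcription of the proof of Theorem \ref{t5}.
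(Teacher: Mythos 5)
Your argument is correct and is essentially the paper's own proof of Theorem \ref{t5} transposed to the classical setting; the paper offers no separate proof of Corollary \ref{c6}, treating it as immediate from that theorem, and your three steps (the $(DPrcP)$ of $\mathcal{M}^{\ast}$ makes $\varphi_{x}^{\ast}$ and $\psi_{y^{\ast}}^{\ast}$ Dunford-Pettis completely continuous, the $(DPP)$ of their domains upgrades this to complete continuity, and the norm identity translates this into uniform convergence on the point evaluation sets) reproduce that proof faithfully. One quibble with your framing: since the paper's conventions identify weakly $\infty$-summable sequences with weakly null sequences (so that $(DPP_{\infty})=(DPP)$, $\infty$-$(DPrcP)=(DPrcP)$, and $\infty$-$(V^{\ast})$ sets are precisely Dunford-Pettis sets), the corollary really is the $p=\infty$ instance of Theorem \ref{t5}; your observation that it cannot be deduced from the theorem is accurate only for finite $p$, so the re-derivation, while valid, is not strictly necessary under the intended reading.
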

Let us recall from \cite{di1}, that a bounded subset $ K $ of $ X^{\ast} $ is called an $( L )$ set, if each weakly null sequence $ (x_{n})_{n} $ in $ X$ tends to $ 0 $ uniformly on $ K.$\
  \begin{thm}\label{t6} Suppose that $ L(X,Y )=K(X,Y) .$\ If
$ X^{\ast}$ has the $ p $-$(DPrcP) $ and $ Y $ has the  $ (DPrcP),$  then $ L(X,Y) $  has the $ p $-$(DPrcP) .$
\end{thm}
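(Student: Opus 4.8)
The plan is to argue by contradiction through the point‑evaluation operators. Assume $L(X,Y)=K(X,Y)$ fails the $p$-$(DPrcP)$. Then there is a $p$-Right null sequence $(T_n)_n$ in $K(X,Y)$ that is not norm null, so after passing to a subsequence we may take $\|T_n\|\ge\varepsilon$ for some $\varepsilon>0$ and all $n$, and choose $x_n\in B_X$ with $\|T_n(x_n)\|\ge\varepsilon$. The whole point will be to show that $(T_n(x_n))_n$ is a $p$-Right null sequence in $Y$: since $Y$ has the $(DPrcP)$, hence the $p$-$(DPrcP)$ by Remark~\ref{r1}(ii), this forces $\|T_n(x_n)\|\to 0$, a contradiction, and Corollary~\ref{c1} finishes the proof.

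The weak nullity of $(T_n(x_n))_n$ is the easy half and uses only the hypothesis on $X^{\ast}$. For fixed $y^{\ast}\in Y^{\ast}$ the evaluation $\psi_{y^{\ast}}\colon K(X,Y)\to X^{\ast}$, $T\mapsto T^{\ast}(y^{\ast})$, is bounded and linear, so it sends the $p$-Right null sequence $(T_n)_n$ to a $p$-Right null sequence $(T_n^{\ast}(y^{\ast}))_n$ in $X^{\ast}$ (bounded operators preserve weak $p$-summability and Dunford–Pettis sets). Because $X^{\ast}$ has the $p$-$(DPrcP)$, Corollary~\ref{c1} gives $\|T_n^{\ast}(y^{\ast})\|\to 0$, whence $|\langle T_n(x_n),y^{\ast}\rangle|\le\|T_n^{\ast}(y^{\ast})\|\to 0$. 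As $y^{\ast}$ was arbitrary, $(T_n(x_n))_n$ is weakly null in $Y$.

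It remains to prove that $K:=\{T_n(x_n):n\in\mathbb{N}\}$ is a Dunford–Pettis set in $Y$, and here the hypothesis $L(X,Y)=K(X,Y)$ must enter. Let $(y_k^{\ast})_k$ be weakly null in $Y^{\ast}$; I must show $\sup_n|\langle T_n(x_n),y_k^{\ast}\rangle|\to 0$. If not, after passing to subsequences there are $\delta>0$ and indices $n_k$ with $|\langle T_{n_k}(x_{n_k}),y_k^{\ast}\rangle|\ge\delta$. If the $n_k$ admit a constant subsequence $n_k\equiv n_0$, then $T_{n_0}(x_{n_0})$ is a fixed vector and $y_k^{\ast}\to 0$ weakly forces $\langle T_{n_0}(x_{n_0}),y_k^{\ast}\rangle\to 0$, contradicting $\ge\delta$. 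Hence $n_k\to\infty$, and we may assume $(n_k)_k$ strictly increasing. Writing $\Phi_k:=x_{n_k}\otimes y_k^{\ast}\in K(X,Y)^{\ast}$, acting by $\langle T,x\otimes y^{\ast}\rangle=\langle Tx,y^{\ast}\rangle$ (so $\|\Phi_k\|\le\|y_k^{\ast}\|$ is bounded), we have $|\Phi_k(T_{n_k})|\ge\delta$, while the subsequence $(T_{n_k})_k$ is still a Dunford–Pettis set. A contradiction will therefore follow once $(\Phi_k)_k$ is known to be weakly null in $K(X,Y)^{\ast}$.

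I expect this last point to be the main obstacle: proving that the elementary tensors $x_{n_k}\otimes y_k^{\ast}$, formed from a bounded sequence in $X$ and a weakly null sequence in $Y^{\ast}$, are weakly null in $K(X,Y)^{\ast}$. This is exactly where the identification $L(X,Y)=K(X,Y)$ is essential, since for arbitrary $X,Y$ the bilinear map $(x,y^{\ast})\mapsto x\otimes y^{\ast}$ is \emph{not} jointly weak‑to‑weak sequentially continuous; the collapse $K(X,Y)=L(X,Y)$ is what lets one describe $K(X,Y)^{\ast\ast}$ finely enough (through the representation of functionals on the space of compact operators by integration against the factor spaces, together with the complete continuity of each compact adjoint $T_{n_k}^{\ast}$) to force $\langle\Xi,x_{n_k}\otimes y_k^{\ast}\rangle\to 0$ for every $\Xi\in K(X,Y)^{\ast\ast}$. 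Granting this, $K$ is a Dunford–Pettis set; combined with the second paragraph, $(T_n(x_n))_n$ is a Dunford–Pettis weakly null sequence in $Y$, so the $(DPrcP)$ of $Y$ makes it relatively compact and hence norm null (a relatively compact, weakly null sequence is norm null), contradicting $\|T_n(x_n)\|\ge\varepsilon$.
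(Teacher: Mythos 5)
Your overall architecture matches the paper's: argue by contradiction, get weak nullity of $(T_n(x_n))_n$ from the evaluation operators $\psi_{y^{\ast}}$ and the $p$-$(DPrcP)$ of $X^{\ast}$, then use tensor functionals to show $\{T_n(x_n)\}$ is a Dunford--Pettis set, and finish with the $(DPrcP)$ of $Y$. The first and last parts are fine. But there is a genuine gap at exactly the point you flag yourself: you never prove that the elementary tensors $x_{n_k}\otimes y_k^{\ast}$ are weakly null in $K(X,Y)^{\ast}$ --- you ``grant'' it. That claim is the entire hard content of the theorem, and as you have posed it, it requires testing against arbitrary $\Xi\in K(X,Y)^{\ast\ast}$, a space for which you have no usable description; nothing in your sketch indicates how the hypothesis $L(X,Y)=K(X,Y)$ would actually produce the conclusion. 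A proof that defers its only nontrivial step is not a proof.

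The paper closes this gap by changing the ambient duality rather than working in $K(X,Y)^{\ast}$ and its bidual. It forms the tensors $x_n\otimes z_n^{\ast}$ in $X\widehat{\bigotimes}_{\pi}Y^{\ast}$, whose dual is concretely identified with $L(X,Y^{\ast\ast})$. The hypothesis $L(X,Y)=K(X,Y)$ enters to guarantee that for each $T\in L(X,Y^{\ast\ast})$ the restriction $T^{\ast}|_{Y^{\ast}}$ is compact, so $\|T^{\ast}(z_n^{\ast})\|\to 0$ for weakly null $(z_n^{\ast})_n$ and hence $(x_n\otimes z_n^{\ast})_n$ is weakly null in $X\widehat{\bigotimes}_{\pi}Y^{\ast}$. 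Since $\{T_n\}$ is a Dunford--Pettis set and $L(X,Y)$ embeds isometrically into the dual space $\bigl(X\widehat{\bigotimes}_{\pi}Y^{\ast}\bigr)^{\ast}$, it is an $(L)$-set there, and evaluating the weakly null tensors on it gives exactly the uniform convergence you need to conclude that $\{T_n(x_n)\}$ is Dunford--Pettis. To repair your argument, replace the pairing with $K(X,Y)^{\ast}$ by this pairing with the projective tensor product; the two ingredients you are missing are the identification $\bigl(X\widehat{\bigotimes}_{\pi}Y^{\ast}\bigr)^{\ast}\cong L(X,Y^{\ast\ast})$ and the fact that a Dunford--Pettis subset of a dual space is an $(L)$-set.
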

\begin{proof}
Suppose that $ (T_{n})_{n} $ is a $ p $-Right null sequence in $ L(X,Y) $
so that $ \Vert T_{n} \Vert =1$ for each $ n \in \mathbb{N}. $\ Let $ (y^{\ast}_{n})_{n} $
 be a sequence in $ B_{Y^{\ast}} $ and $ (x_{n})_{n} $ be a
sequence in $ B_{X} $ so that $ y^{\ast}_{n}(T_{n}(x_{n}))>\frac{1}{2} $
 for each $ n \in \mathbb{N}. $\ Since $ X^{\ast} $ has the  $ p $-$ (DPrcP), $  for each $ y^{\ast}\in Y^{\ast}, $
 the evaluation operator  $  \psi_{y^{\ast}}:L(X,Y)\rightarrow X^{\ast}$ is Dunford-Pettis $ p $-convergent.\
Therefore,
 \begin{center}
$ \vert \langle  y^{\ast} , T_{n}(x_{n}) \rangle\vert \leq \Vert T^{\ast}_{n}(y^{\ast})   \Vert \Vert x_{n} \Vert\rightarrow 0. $\
\end{center}
 So, $ (T_{n}(x_{n}))_{n} $ is weakly null in $ Y. $\ Now, we claim that $ \lbrace T_{n}(x_{n}):n\in \mathbb{N} \rbrace $ is a  Dunford-Pettis set in $ Y. $\ Let
 $ (z^{\ast}_{n})_{n} $ be a weakly null sequence in $ Y^{\ast}, $ and let
$ T \in  (X\widehat{\bigotimes}_{\pi}  Y^{\ast})^{\ast}.$\ It is well known that $ (X\widehat{\bigotimes}_{\pi}  Y^{\ast})^{\ast} \cong  L(X,Y^{\ast\ast}) $ (see \cite{du}, page 230).\
Therefore, by the hypothesis $ T^{\ast}_{\vert_{Y^{\ast}}} $
 is a compact operator.\ Hence, $ \lbrace T^{\ast}(z^{\ast}_{n}) :n\in  \mathbb{N} \rbrace $
   is relatively compact.\ Thus,
 $ \vert \langle x_{n}\otimes z_{n}^{\ast},T \rangle\vert\leq \Vert T^{\ast}(z^{\ast}_{n}) \Vert\rightarrow 0 ,$ and so
 $ (x_{n}\otimes z_{n}^{\ast})_{n} $
is weakly null in $ X\widehat{\bigotimes}_{\pi}  Y^{\ast}. $\
Since, $ L(X,Y) $ embeds isometrically in $ L(X,Y^{\ast\ast}), $ $ (T_{n})_{n} $ is a Dunford-Pettis sequence in $ L(X,Y^{\ast\ast}) .$\ But, a Dunford-Pettis subset of a dual space is necessarily an
$ (L) $-subset of the dual space.\ Therefore, $ z^{\ast}_{n}(T_{n}(x_{n})) \rightarrow 0.$\
 Thus $ (T_{n}(x_{n}))_{n} $ is a Dunford-Pettis and weakly null
 sequence in $ Y. $\ Hence, $ \Vert T_{n}(x_{n})\Vert\rightarrow 0, $ which is a  contradiction.\
\end{proof}
Let us recall from \cite{W} that, a norm $\Vert .\Vert  $ of a Banach lattice $ E $ is order continuous if for each net $ (x_{\alpha}) $
such that $ x_{\alpha}\downarrow 0 $ in $ E, $ the net $ (x_{\alpha}) $  is norm converges to $ 0 ,$ where the notation
$ x_{\alpha}\downarrow 0 $ means that the sequence $ (x_{\alpha}) $  is decreasing, its infimum exists and $\inf (x_{\alpha})=0.$ \

\begin{thm}\label{t7} Let  $E$ be a Banach lattice and  $ T:E\rightarrow X $ be a Dunford-Pettis $ p $-convergent operator.\
If the adjoint of
 $ T $ is  Dunford-Pettis $ p $-convergent, then one of the following assertions holds.\\
$\rm{(i)}$ The norm of $ E^{\ast} $ is order continuous.\\
$\rm{(ii)}$ $X^{\ast}$ has the $ p $-$ (DPrcP). $
\end{thm}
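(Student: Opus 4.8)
The plan is to prove the dichotomy in contrapositive form: assuming that \emph{neither} (i) nor (ii) holds, I would manufacture sequences incompatible with the simultaneous Dunford-Pettis $p$-convergence of $T$ and $T^{\ast}$. Thus I would suppose that the norm of $E^{\ast}$ is not order continuous while, at the same time, $X^{\ast}$ fails the $p$-$(DPrcP)$, and work toward a contradiction. The two failures will be converted into concrete sequences: one on the lattice side, living in $E^{\ast}$ (and in $E$), and one on the range side, living in $X^{\ast}$, to be joined through $T$ and $T^{\ast}$.

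First I would exploit the lattice hypothesis. Since the norm of $E^{\ast}$ is not order continuous, the classical characterization of order continuity via order bounded disjoint sequences (applied to the Banach lattice $E^{\ast}$) yields a positive, pairwise disjoint, order bounded sequence $(f_{n})_{n}$ in $E^{\ast}$, say $0\le f_{n}\le g$ for some $g\in (E^{\ast})^{+}$, with $\inf_{n}\Vert f_{n}\Vert=\delta>0$; selecting $x_{n}\in B_{E}^{+}$ with $f_{n}(x_{n})>\delta/2$ one can moreover arrange $(x_{n})_{n}$ to be disjoint. The decisive elementary point is that disjointness together with order boundedness forces weak $p$-summability of $(f_{n})_{n}$: for pairwise disjoint positive elements $\sum_{n=1}^{N}f_{n}=\bigvee_{n=1}^{N}f_{n}\le g$, so $\sum_{n}\vert\langle\phi,f_{n}\rangle\vert\le\langle\vert\phi\vert,g\rangle<\infty$ for every $\phi\in E^{\ast\ast}$, whence $(f_{n})_{n}\in\ell_{1}^{w}(E^{\ast})\subseteq\ell_{p}^{w}(E^{\ast})$. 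In parallel, the failure of the $p$-$(DPrcP)$ for $X^{\ast}$ together with Corollary \ref{c1} provides a $p$-Right null sequence $(y_{n}^{\ast})_{n}$ in $X^{\ast}$ (weakly $p$-summable and a Dunford-Pettis set) with $\Vert y_{n}^{\ast}\Vert\ge\varepsilon$; since $T^{\ast}$ is Dunford-Pettis $p$-convergent, Proposition \ref{p1} forces $\Vert T^{\ast}(y_{n}^{\ast})\Vert_{E^{\ast}}\to 0$, that is, $\sup_{m}\vert y_{n}^{\ast}(Tx_{m})\vert=\sup_{m}\vert T^{\ast}(y_{n}^{\ast})(x_{m})\vert\to 0$ as $n\to\infty$.

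The heart of the argument, and the step I expect to be the main obstacle, is to transfer information between the $X^{\ast}$-side sequence $(y_{n}^{\ast})_{n}$ and the $E^{\ast}$-side disjoint system $(f_{n})_{n}$ through $T$ and $T^{\ast}$ so as to contradict $\Vert y_{n}^{\ast}\Vert\ge\varepsilon$. The tools I would use are, on one hand, that a Dunford-Pettis subset of a dual space is automatically an $(L)$-set (as already invoked in the proof of Theorem \ref{t6}), which applied to $(y_{n}^{\ast})_{n}$ controls $\sup_{n}\vert y_{n}^{\ast}(Tx_{m})\vert$ against any weak nullity of the bounded sequence $(Tx_{m})_{m}$ in $X$; and, on the other hand, the Dunford-Pettis $p$-convergence of $T$ itself, fed with a weakly $p$-summable sequence built by disjointification from $(x_{m})_{m}$, which forces the matching norms $\Vert Tx_{m}\Vert$ (along the relevant subsequence) to vanish. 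Balancing these two estimates against the biorthogonality $f_{n}(x_{n})>\delta/2$ and the lower bound $\Vert y_{n}^{\ast}\Vert\ge\varepsilon$ should produce the contradiction that establishes (ii). The genuine subtlety lies in choosing the correct diagonal/biorthogonal pairing so that the $(L)$-set estimate and the $p$-convergence of $T$ act on the same block of indices; it is precisely at this junction that both hypotheses, on $T$ and on $T^{\ast}$, are indispensable, and where the order bounded disjoint structure coming from the failure of order continuity of the norm of $E^{\ast}$ must be used in full.
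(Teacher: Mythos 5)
Your proposal has a genuine gap, and it is not merely the admitted one (``should produce the contradiction'') at the step you yourself flag as the main obstacle. The deeper problem is that your strategy tries to extract a contradiction from the \emph{given} operator $T$, and this cannot succeed: the assumptions ``the norm of $E^{\ast}$ is not order continuous'' and ``$X^{\ast}$ fails the $p$-$(DPrcP)$'' are properties of the spaces alone and place no constraint whatsoever on the particular $T$ you were handed --- for instance $T=0$ is Dunford--Pettis $p$-convergent with Dunford--Pettis $p$-convergent adjoint no matter what $E$ and $X$ are, so no amount of pairing $(f_{n})_{n}$ against $(y_{n}^{\ast})_{n}$ through $T$ and $T^{\ast}$ can close the argument. (Relatedly, your disjoint order bounded sequence $(f_{n})_{n}$ lives in $E^{\ast}$, which is the \emph{codomain} of $T^{\ast}$ and the domain of neither operator, so there is nothing to feed it into; its weak $1$-summability is never actually used.) The theorem's real content, and what the paper proves, is the version quantified over all operators: if neither (i) nor (ii) holds, then one can \emph{construct} a Dunford--Pettis $p$-convergent operator $E\rightarrow X$ whose adjoint is not Dunford--Pettis $p$-convergent.

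Concretely, the paper's proof runs as follows. Since the norm of $E^{\ast}$ is not order continuous, Wnuk's theorem (Theorem 1 of \cite{W}) provides a sublattice of $E$ isomorphic to $\ell_{1}$ together with a positive (hence surjective) projection $P:E\rightarrow\ell_{1}$. Since $X^{\ast}$ fails the $p$-$(DPrcP)$, there is a normalized $p$-Right null sequence $(y_{n}^{\ast})_{n}$ in $X^{\ast}$, and one picks $(y_{n})_{n}$ in $B_{X}$ and $\varepsilon_{0}>0$ with $\vert y_{n}^{\ast}(y_{n})\vert\geq\varepsilon_{0}$. Setting $S:\ell_{1}\rightarrow X$, $S((\alpha_{n})_{n})=\sum_{n}\alpha_{n}y_{n}$ and $T=S\circ P$, the operator $T$ is Dunford--Pettis $p$-convergent because it factors through $\ell_{1}$, which has the $p$-Schur property; yet surjectivity of $P$ gives $\delta>0$ with $\delta B_{\ell_{1}}\subseteq P(B_{E})$, whence $\Vert T^{\ast}(y_{n}^{\ast})\Vert\geq\delta\vert y_{n}^{\ast}(S(e_{n}^{1}))\vert=\delta\vert y_{n}^{\ast}(y_{n})\vert\geq\delta\varepsilon_{0}$, so $T^{\ast}$ is not Dunford--Pettis $p$-convergent. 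The lattice hypothesis is thus consumed entirely by the factorization through $\ell_{1}$, not by a disjoint-sequence estimate in $E^{\ast}$; to repair your write-up you would need to abandon the ``work with the given $T$'' framework and produce such a construction.
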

\begin{proof}
Assume that norm $ E^{\ast} $ is not order continuous and $ X^{\ast}$ does not have the $ p $-$ (DPrcP). $\
Since norm $ E^{\ast}$ is not order continuous, there exist a
sublattice $ {M}$ of $ E,$ such that it is isomorphic to $
\ell_{1}$ and a positive projection $ P:E\rightarrow \ell_{1}$
(see Theorem 1 in \cite{W}).\ Since $ X^{\ast}$ does not have the $ p $-$ (DPrcP), $ there exists a $p$-Right null and  normalized sequence $ (y^{\ast}_{n})_{n}$ in $X^{\ast}. $\ Therefore, there exit a sequence $ (y_{n})_{n} $ in $ X $
with $ \Vert y_{n}\Vert \leq 1 $ and an $ \varepsilon_{0}>0 $ such that $ \vert y^{\ast}_{n} (y_{n})\vert\geq \varepsilon_{0} $
for all $ n\in\mathbb{N}.$\ Suppose that $ T = S\circ P ,$ where the operator $ S:\ell_{1}\rightarrow X $ is defined by
$ S(\alpha_{n})=\displaystyle \sum_{n}\alpha_{n}y_{n}. $\
Since $ \ell_{1}$  is  $ p $-Schur space,  the operator $ T $ is Dunford-Pettis $ p $-convergent.\ Now, we claim that $ T^{\ast} $ is not a Dunford-Pettis $ p $-convergent operator.\ As the operator
$P$ is surjective, there exits $ \delta> 0 $ such that
$\delta B_{\ell_{1}}\subset P(B_{E}).$\ So, we have
$$ \Vert T^{\ast}(y^{\ast}_{n})\Vert =\sup_{x\in B_{E}}\vert T^{\ast}(y^{\ast}_{n})(x)\vert=\sup_{x\in B_{E}}\vert
y^{\ast}_{n}(T(x))\vert =$$
$$\sup_{x\in B_{E}}\vert
y^{\ast}_{n}(S(P(x))\vert\geq\delta .\vert
y^{\ast}_{n}(S(e^{1}_{n}))\vert \geq \delta
\varepsilon_{0},$$
where $ (e^{1}_{n})_{n} $ is the standard canonical basic of $ \ell_{1}.$\
Therefore, $ T^{\ast} $ is not a Dunford-Pettis $ p $-convergent operator, which
is a contradiction.
\end{proof}

Let us recall from \cite{e1}, that a bounded linear operator $ T: C(\Omega,X)\rightarrow Y$ is dominated, if there exists a positive linear functional $ L $ on $  C(\Omega)^{\ast}$ such that $ \Vert T(f) \Vert\leq L(\Vert  f\Vert) , ~~~~ f\in C(\Omega,X) . $\\
By a similar technique we obtain the following result which is the $  p$-version of {\rm (\cite[Theorem 11]{e1})}.\
\begin{thm}\label{t8}
Suppose  that $ Y$ has the $ p $-$ (DPrcP)$ and  $ K $ is a compact Hausdorff space.\ If $ X $ has the $ (DPP_{p}), $ then any dominated operator $ T $ from $ C (\Omega, X) $ into $ Y $ is  $ p $-convergent.
\end{thm}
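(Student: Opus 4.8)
The plan is to mimic the structure of Emmanuele's original argument (the cited \cite[Theorem 11]{e1}), replacing "completely continuous" by "$p$-convergent" throughout and using the order-$p$ versions of the Dunford-Pettis machinery. The goal is to show that for a dominated operator $T : C(\Omega, X) \to Y$, any weakly $p$-summable sequence is mapped to a norm-null sequence. So I would start with a weakly $p$-summable sequence $(f_n)_n$ in $C(\Omega, X)$ and aim to prove $\Vert T(f_n)\Vert \to 0$.

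\smallskip

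First I would exploit the domination hypothesis. Since $T$ is dominated, there is a positive linear functional $L$ on $C(\Omega)^{\ast}$ with $\Vert T(f)\Vert \le L(\Vert f\Vert)$ for all $f \in C(\Omega, X)$, where $\Vert f\Vert \in C(\Omega)$ is the function $\omega \mapsto \Vert f(\omega)\Vert_X$. The functional $L$ corresponds, via Riesz representation, to a positive regular Borel measure $\mu$ on $\Omega$, so the estimate becomes $\Vert T(f)\Vert \le \int_\Omega \Vert f(\omega)\Vert_X \, d\mu(\omega)$. The strategy is then to reduce the problem to a statement about the scalar integral of $\Vert f_n(\cdot)\Vert_X$ and show it tends to $0$. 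The key structural fact I would invoke is that a dominated operator factors (or can be estimated) through the Lebesgue-Bochner space $L_1(\mu, X)$, so that control of $T(f_n)$ reduces to control of the $L_1(\mu, X)$-behavior of $(f_n)$.

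\smallskip

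The crucial step is to bring in the hypothesis that $X$ has the $(DPP_p)$ and that $Y$ has the $p$-$(DPrcP)$. Here I would argue that the set $\{T(f_n) : n \in \mathbb{N}\}$ is both weakly $p$-summable (as the continuous image of a weakly $p$-summable sequence) and a Dunford-Pettis set in $Y$; that is, $(T(f_n))_n$ is $p$-Right null. Once that is established, the $p$-$(DPrcP)$ of $Y$ forces $\Vert T(f_n)\Vert \to 0$ directly. To show $(T(f_n))_n$ is a Dunford-Pettis set, I would take a weakly null sequence $(y_n^{\ast})_n$ in $Y^{\ast}$ and estimate $\langle y_n^{\ast}, T(f_n)\rangle$; the domination inequality together with the $(DPP_p)$ of $X$ is what converts the weak $p$-summability of $(f_n)$ into the uniform smallness needed. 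The role of $(DPP_p)$ is precisely to guarantee that weakly compact operators arising from the representing measure (the "slices" $x \mapsto$ values of $f_n$) behave as $p$-convergent operators, so that the weak $p$-summability survives the passage through the integral.

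\smallskip

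\textbf{The main obstacle} I expect is verifying rigorously that $\{T(f_n)\}$ is a Dunford-Pettis set, i.e.\ controlling the double sequence that appears when one pairs a weakly null $(y_n^{\ast})$ against $(T(f_n))$ through the representing measure. This is where the $(DPP_p)$ hypothesis on $X$ must be deployed carefully: one has to show that the pointwise evaluations $\omega \mapsto f_n(\omega)$, being weakly $p$-summable in $X$, are sent by the measure kernel to something whose weak-null testing is uniform. The delicate point is the interchange of the limit in $n$ with the integration over $\Omega$, which requires a uniform integrability / equicontinuity argument on the measure $\mu$ combined with the $(DPP_p)$-driven $p$-convergence. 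Assuming the analogue of Emmanuele's computation goes through mutatis mutandis with "weakly null" upgraded to "weakly $p$-summable" and "$(DPP)$" upgraded to "$(DPP_p)$", the conclusion $\Vert T(f_n)\Vert \to 0$ follows, establishing that $T$ is $p$-convergent.
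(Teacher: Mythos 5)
Your proposal follows essentially the same route as the paper: show that $(T(f_n))_n$ is weakly $p$-summable (continuous image) and a Dunford--Pettis set in $Y$ by pairing against a weakly null sequence $(y_n^{\ast})_n$ through the Dinculeanu kernel $G$ of the dominated operator, using the $(DPP_p)$ of $X$ pointwise in $t$ and then the dominated convergence theorem, and finally invoke the $p$-$(DPrcP)$ of $Y$ to get $\Vert T(f_n)\Vert\to 0$. One caveat: the reduction announced in your first paragraph, namely proving $\int_\Omega \Vert f_n(\omega)\Vert_X\,d\mu\to 0$, is a dead end (take $f_n\equiv e_n$ constant in $C(\Omega,\ell_2)$: weakly $2$-summable yet $\Vert f_n(\omega)\Vert\equiv 1$), but you abandon it for the correct Dunford--Pettis-set argument, and the limit/integral interchange you worry about needs only the uniform bound $\vert y_n^{\ast}(G(t)f_n(t))\vert\le M$ coming from $\Vert G(t)\Vert=1$ a.e., not a uniform integrability argument.
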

\begin{proof}
Suppose that $ T:C(\Omega,X) \rightarrow Y$ is an arbitrary dominated operator.\ By
Theorem 5 in Chapter $ \rm{III} $ of \cite{di}, there is a function $ G $ from $\Omega $ into $ L(X, Y^{\ast\ast}) $ such that\\
$\rm{( i)} $ $ \Vert G(t)\Vert =1 ~\mu. a.e.  $ in $ \Omega. $ i.e.; $ \mu(\lbrace t\in \Omega : \Vert G(t)\Vert \not =1\rbrace)=0. $\\
$\rm{( ii)} $ For each $ y^{\ast}\in Y^{\ast} $ and $ f \in C (\Omega, X), $ the
function $ y^{\ast}( G(.)f(.)) $ is $ \mu
$-integrable and moreover
\begin{center}
$   y^{\ast}(T(f)) =\int_{\Omega}
y^{\ast}( G(t)f(t)) d\mu ~~ $ for $ ~~f\in C(\Omega,X). $
\end{center}
Where $ \mu $ is the least regular Borel measure dominating $ T.$\ Consider a  weakly  $ p $-summable sequence  $ (f_{n})_{n} $ in $ C(\Omega, X) .$\ Since
continuous linear images of weakly $ p$-summable sequences are weakly $ p$-summable sequences, $ (T(f_{n}))_{n} $ is a weakly $ p$-summable sequence in $ Y. $\ Now, we show that $ \lbrace T(f_{n})) :n\in\mathbb{N}\rbrace $ is a Dunford-Pettis set in $ Y. $\ For this purpose, we consider a weak null sequence
 $ (y_{n}^{\ast})_{n}$ in $Y^{\ast}. $\ It is not difficult to show that,
 for each $t \in \Omega,~  (G^{\ast}(t) y^{\ast}_{n})_{n} $
 is weakly null in $ X^{\ast} $ and
  $ (f_{n}(t))_{n} $ is a weakly $ p $-summable sequence in $ X. $\
 Since $ X $ has the $ (DPP_{p}), $ we have :
\begin{center}
$  y_{n}^{\ast}( (G(t) f_{n}(t)) )= G^{\ast}(t) y_{n}^{\ast}( f_{n}(t)) \rightarrow 0. $
\end{center}
 Moreover, there exists a constant $ M>0 $ such that
 $ \vert   y_{n}^{\ast}(G(t)f_{n}(t))\vert\leq M  $ for all $ t\in \Omega $ and $ n\in \mathbb{N}. $\ The Lebesgue dominated convergent Theorem, implies that:
 \begin{center}
 $ \displaystyle\lim_{n\rightarrow\infty} y_{n}^{\ast}(T(f_{n})) =\lim_{n\rightarrow\infty}\int_{\Omega} y_{n}^{\ast}( G(t)f_{n}(t) )d\mu =0. $
 \end{center}
Therefore,  $ \lbrace T(f_{n}) :n\in\mathbb{N}\rbrace
$ is a Dunford-Pettis set in $  Y$\ {\rm (\cite[Theorem 1]{An})}.\ Hence, $  (T(f_{n}))_{n}  $ is a $ p $-Right null
sequence in $ Y, $ and so  $ \Vert T(f_{n}) \Vert\rightarrow 0.$\ Since $ Y $ has the $ p $-$ (DPrcP). $\
 \end{proof}

\section{Characterizations of some classes of operators on $  C(\Omega,X)$}
Let us recall  from \cite{bb,du},
every bounded linear operator $  T : C(\Omega,X) \rightarrow Y   $  may be represented
by a vector measure  $m:\Sigma\rightarrow L(X,Y^{\ast\ast}) $ of finite semi-variation  such that
\begin{center}
$ T(f)=\int_{\Omega} f dm, ~~~~f \in C(\Omega,X)$ for every $ f\in C(\Omega,X) .$
\end{center}
 This set function $ m $ is called the representing measure
of $ T. $\ Also,
a bounded linear operator  $ T :
C(\Omega,X) \rightarrow Y $ is called strongly bounded if $  m$ is strongly bounded.\\
Different efforts  have
been done in order to characterize several types of operators in terms of their representing
measure.\
Suppose that $ T:C(\Omega,X)\rightarrow Y $ is a  strongly bounded operator with representing measure $ m:\Sigma\rightarrow L(X,Y) $ and $ \hat{T}:B(\Omega,X)\rightarrow Y $ is its extension.\
The authors  in \cite{bc,bb,bp1,cs,c,g10,g22} have
found that study of $ \hat{T} $  is more suitable than $ T.$\
Our aim in this section is to obtain some characterizations
 of Dunford-Pettis $ p $-convergent operators in terms of their representing
measure.\\
By using the same
argument as in the proof of {\rm (\cite[Theorem 3]{bc})}, we obtain the following
result.\
\begin{thm}\label{t9}
Suppose that $ T : C(\Omega,X) \rightarrow Y $ is a
strongly bounded operator.\ Then, $ T $
is  Dunford-Pettis $ p $-convergent if and only if $ \hat{T} $ is Dunford-Pettis $ p $-convergent.\
\end{thm}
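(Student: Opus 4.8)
The plan is to prove the two implications separately, treating $\hat{T}\Rightarrow T$ as routine and concentrating the real work on $T\Rightarrow\hat{T}$. For the easy direction I would use that the inclusion $j:C(\Omega,X)\hookrightarrow B(\Omega,X)$ is an isometric embedding with $T=\hat{T}\circ j$. If $(f_{n})_{n}$ is a $p$-Right null sequence in $C(\Omega,X)$, then it is weakly $p$-summable in $B(\Omega,X)$, since continuous linear images of weakly $p$-summable sequences are weakly $p$-summable; and $\lbrace f_{n}:n\rbrace$ stays a Dunford-Pettis set in $B(\Omega,X)$, because for any weakly null sequence $(\psi_{m})_{m}$ in $B(\Omega,X)^{\ast}$ the restrictions $j^{\ast}\psi_{m}=\psi_{m}|_{C(\Omega,X)}$ are weakly null in $C(\Omega,X)^{\ast}$ and agree with $\psi_{m}$ on $\lbrace f_{n}\rbrace$, hence converge uniformly there. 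Thus $(f_{n})_{n}$ is $p$-Right null in $B(\Omega,X)$, so $\hat{T}$ annihilates it and $\hat{T}f_{n}=Tf_{n}\rightarrow 0$, giving $T\in DPC_{p}(C(\Omega,X),Y)$.

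For the hard direction I would argue by contraposition: assuming $\hat{T}$ is not Dunford-Pettis $p$-convergent, choose a $p$-Right null sequence $(f_{n})_{n}$ in $B(\Omega,X)$ with $\Vert f_{n}\Vert\leq 1$ and $\Vert\hat{T}f_{n}\Vert\geq\varepsilon_{0}>0$, and manufacture from it a $p$-Right null sequence in $C(\Omega,X)$ on which $T$ does not vanish. Writing $\hat{T}f=\int_{\Omega}f\,dm$ for the strongly bounded representing measure $m$, strong boundedness furnishes a finite control measure $\mu$ for which the semivariation is uniformly $\mu$-continuous, i.e. $\tilde{m}(A)\rightarrow 0$ as $\mu(A)\rightarrow 0$. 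Using regularity of $\mu$ and Lusin's theorem I would produce a closed set $F$ with $\mu(\Omega\setminus F)$ small on which every $f_{n}$ is continuous, and replace $f_{n}$ by the continuous function $g_{n}=\sum_{i}\varphi_{i}\,f_{n}(t_{i})$ assembled from a finite partition of unity $\lbrace\varphi_{i}\rbrace$ with base points $\lbrace t_{i}\rbrace$. The point of using finitely many evaluations is that it preserves $p$-Right nullness: since each point evaluation $f\mapsto f(t_{i})$ belongs to $B(\Omega,X)^{\ast}$, and bounded operators carry Dunford-Pettis sets to Dunford-Pettis sets, testing any $\phi\in C(\Omega,X)^{\ast}=M(\Omega,X^{\ast})$ against $g_{n}$ yields the finite sum $\sum_{i}\langle f_{n}(t_{i}),\int\varphi_{i}\,d\phi\rangle$, a fixed finite combination of coordinates of the weakly $p$-summable Dunford-Pettis sequence $(f_{n})_{n}$. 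Hence $(g_{n})_{n}$ is again weakly $p$-summable and a Dunford-Pettis set in $C(\Omega,X)$, so $T\in DPC_{p}$ forces $\Vert Tg_{n}\Vert\rightarrow 0$; estimating the error $\hat{T}f_{n}-Tg_{n}=\int(f_{n}-g_{n})\,dm$ by splitting over $F$ and $\Omega\setminus F$ (the tail bounded by $2\tilde{m}(\Omega\setminus F)$, the main part by uniform continuity of $f_{n}|_{F}$ against the mesh of the cover) would then force $\Vert\hat{T}f_{n}\Vert\rightarrow 0$, the desired contradiction.

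I expect the main obstacle to be the tension inside the last step: making $\hat{T}f_{n}-Tg_{n}$ small demands a cover fine enough to resolve each $f_{n}|_{F}$, whose modulus of continuity varies with $n$, whereas preserving weak $p$-summability and the Dunford-Pettis property of $(g_{n})_{n}$ demands that the $g_{n}$ be built from a single, $n$-independent finite set of evaluations. Reconciling these two requirements is exactly where strong boundedness does the decisive work, since the uniform $\mu$-continuity of the semivariation bounds the tail integral over $\Omega\setminus F$ uniformly in $n$, allowing a diagonal choice of $F$ and cover (as in the proof of \cite[Theorem 3]{bc}) that simultaneously controls the approximation error and keeps the constructed sequence $p$-Right null. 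The delicate heart of the argument is this simultaneous bookkeeping rather than any isolated estimate.
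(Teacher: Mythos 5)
Your easy direction is fine: $T=\hat T\circ j$ with $j$ the isometric inclusion, and a bounded linear operator sends weakly $p$-summable sequences to weakly $p$-summable sequences and Dunford--Pettis sets to Dunford--Pettis sets, so $p$-Right null sequences in $C(\Omega,X)$ remain $p$-Right null in $B(\Omega,X)$. The paper does not even spell this direction out.

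The hard direction is where the gap is, and it sits exactly at the point you yourself flag as ``the delicate heart.'' Your construction $g_n=\sum_i\varphi_i\,f_n(t_i)$ needs the finite set of base points $\{t_i\}$ to be independent of $n$ for $(g_n)_n$ to inherit weak $p$-summability and the Dunford--Pettis property (your argument reads $\phi(g_n)$ as a \emph{fixed} finite combination of coordinates of $(f_n)_n$), but it also needs the mesh of the cover to be fine relative to the modulus of continuity of $f_n|_F$, which varies with $n$, for $\|f_n-g_n\|$ to be small on $F$. These requirements are incompatible unless the $f_n|_F$ are equicontinuous, which there is no reason to expect; and if you let the partition depend on $n$, then $g_n=V_nf_n$ for a sequence of distinct operators $V_n$, and a sequence of the form $(V_nf_n)$ with $\|V_n\|\le 1$ need not be weakly $p$-summable, let alone a Dunford--Pettis set --- the ``fixed finite combination'' argument collapses. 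No diagonal choice rescues this, and strong boundedness does not help here: it only controls the tail over $\Omega\setminus F$, not the main term. The paper's (and Bombal--Cembranos') resolution is structurally different: after Lusin one forms the closed span $H=[\,f_n|_{\Omega_0}\,]$ inside $C(\Omega_0,X)$ and invokes the isometric linear extension operator $S:H\to C(\Omega,X)$ of {\rm\cite[Theorem 1]{bc}} (a vector-valued Borsuk--Dugundji theorem). Since $S$ composed with restriction is a \emph{single} bounded linear operator, $h_n=S(f_n|_{\Omega_0})$ is automatically $p$-Right null in $C(\Omega,X)$, and $h_n$ agrees with $f_n$ \emph{exactly} on $\Omega_0$, so the error $T(h_n)-\hat T(f_n)$ is supported on $\Omega\setminus\Omega_0$ and bounded by $2\tilde m(\Omega\setminus\Omega_0)<1/2$; no modulus-of-continuity bookkeeping occurs at all. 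Without this extension operator (or an equivalent device) your proof does not close.
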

\begin{proof}
Suppose that $ T   $  is strongly bounded  such that $ T$ is Dunford-Pettis $ p $-convergent and
 $ \hat{T} $ is not.\ Therefore,  there exists  a $ p $-Right null sequence $ (y^{\ast}_{n})_{n} $ in $ Y^{\ast} $ and
 a sequence $ (f_{n})_{n} $ in $ B $
 such that $ \vert\langle y^{\ast}_{n},\hat{T}(f_{n}) \rangle\vert=1$
 for all $ n\in \mathbb{N}. $\ Without loss of generality assume  $ \Vert y_{n}^{\ast}  \Vert\leq1 $
for all $ n. $\\
 By using the existence of a control measure for $ m $ and Lusin's theorem, we can find a
compact subset $\Omega_{0} $ of $  \Omega$ such that $ \tilde{m}(\Omega-\Omega_{0}) <\frac{1}{4}$ and
$ g_{n} =f_{n}\vert_{\Omega_{0}}$ is continuous for each $ n\in \mathbb{N}. $\ Let $ H = [g_{n}] $ be the closed linear
subspace spanned by $ (g_{n})_{n} $ in $ C(\Omega_{0},X) .$\ By  {\rm (\cite[Theorem 1]{bc})}, there is an isometric extension operator  $ S : H \rightarrow C(\Omega,X) $ such that the sequence $ h_{n} =S(g_{n})$  is $ p $-Right null in $ C(\Omega, X).$\ It is easy to verify that $  \Vert  T(h_{n})\Vert\geq\frac{1}{2}, $
which is a contradiction with the fact that $ T $ is Dunford-Pettis $ p $-convergent.
\end{proof}

\begin{defn}\label{d2}  A bounded subset $ K $ of  $ X $ is said to be $ p $-Right$ ^{\ast}$ set, if for
every $ p $-Right null sequence $ (x^{\ast}_{n})_{n} $ in $ X^{\ast} $ it follows: $$ \lim_{n} \sup_{x\in K}\vert x_{n}^{\ast}(x) \vert=0.$$\
\end{defn}
\begin{prop}\label{p5}
$ \rm{(i)} $ If $ T \in L(X,Y) ,$ then
$ T^{\ast} $ is Dunford-Pettis $ p $-convergent if and only if  $ T(B_{X}) $ is a $ p $-Right$ ^{\ast} $ subset of $ Y.$ \\
 $ \rm{(ii)} $  $ X^{\ast} $ has the $ p $-$ (DPrcP) $ if and only if every bounded subset of $ X $ is a $ p $-Right$ ^{\ast} $ set.
\end{prop}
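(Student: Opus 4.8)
The plan is to reduce the whole proposition to a single elementary duality identity and then quote the characterizations already established. First I would record, for any $y^{\ast}\in Y^{\ast}$, the computation
$$\sup_{y\in T(B_{X})}\vert y^{\ast}(y)\vert=\sup_{x\in B_{X}}\vert y^{\ast}(T(x))\vert=\sup_{x\in B_{X}}\vert (T^{\ast}y^{\ast})(x)\vert=\Vert T^{\ast}(y^{\ast})\Vert,$$
which is nothing more than the definition of the norm of the functional $T^{\ast}y^{\ast}$. This one identity will carry both parts.

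For part $\rm{(i)}$, I would apply the identity along a $p$-Right null sequence $(y^{\ast}_{n})_{n}$ in $Y^{\ast}$. Since a $p$-Right null sequence is precisely a weakly $p$-summable sequence that is a Dunford--Pettis set, and $T^{\ast}$ being Dunford--Pettis $p$-convergent means exactly that $\Vert T^{\ast}(y^{\ast}_{n})\Vert\to 0$ for every such sequence, the displayed equality shows this is literally the same assertion as $\sup_{y\in T(B_{X})}\vert y^{\ast}_{n}(y)\vert\to 0$, i.e.\ that $T(B_{X})$ is a $p$-Right$^{\ast}$ set in the sense of Definition \ref{d2}. Both implications therefore drop out simultaneously, with neither direction harder than the other.

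For part $\rm{(ii)}$, I would first reduce the family of all bounded sets to the single set $B_{X}$: if $K$ is bounded, pick $r>0$ with $K\subseteq rB_{X}$, so that $\sup_{x\in K}\vert x^{\ast}_{n}(x)\vert\le r\,\Vert x^{\ast}_{n}\Vert$ for any sequence $(x^{\ast}_{n})_{n}$ in $X^{\ast}$; hence every bounded subset of $X$ is $p$-Right$^{\ast}$ if and only if $B_{X}$ alone is. Next I would invoke part $\rm{(i)}$ with $T=id_{X}$, so that $T^{\ast}=id_{X^{\ast}}$ and $T(B_{X})=B_{X}$: this says $B_{X}$ is $p$-Right$^{\ast}$ exactly when $id_{X^{\ast}}$ is Dunford--Pettis $p$-convergent. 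Finally, Corollary \ref{c1} applied to the space $X^{\ast}$ identifies that last condition with $X^{\ast}$ having the $p$-$(DPrcP)$, closing the chain of equivalences.

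There is essentially no serious obstacle here; the statement is a formal consequence of the norm identity together with Corollary \ref{c1}. The only points demanding a moment's care are the bookkeeping of which dual the $p$-Right null sequences inhabit (sequences in $Y^{\ast}$ for $\rm{(i)}$, in $X^{\ast}$ for $\rm{(ii)}$) and verifying that the passage from $B_{X}$ to arbitrary bounded sets is harmless, both of which are routine.
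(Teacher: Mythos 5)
Your proof is correct; the paper in fact states Proposition \ref{p5} without proof, and your argument---the duality identity $\sup_{y\in T(B_{X})}\vert y^{\ast}(y)\vert=\Vert T^{\ast}(y^{\ast})\Vert$ applied along $p$-Right null sequences, followed by the reduction to $B_{X}$ and Corollary \ref{c1} for part (ii)---is exactly the routine verification the author implicitly relies on.
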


\begin{thm}\label{t10}
Suppose that $ T : C(\Omega,X) \rightarrow Y $ is a
strongly bounded operator.\ Then, $ T^{\ast} $
is a Dunford-Pettis $ p $-convergent if and only if $ \hat{T}^{\ast} $ is Dunford-Pettis $ p $-convergent.\
\end{thm}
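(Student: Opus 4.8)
The plan is to exploit the factorization coming from the inclusion $i\colon C(\Omega,X)\hookrightarrow B(\Omega,X)$. Since $\hat{T}$ extends $T$, we have $T=\hat{T}\circ i$, and hence $T^{\ast}=i^{\ast}\circ\hat{T}^{\ast}$, where $i^{\ast}\colon B(\Omega,X)^{\ast}\to C(\Omega,X)^{\ast}$ is the restriction map with $\Vert i^{\ast}\Vert=1$. This identity disposes of one implication at once: if $\hat{T}^{\ast}$ is Dunford-Pettis $p$-convergent and $(y_{n}^{\ast})_{n}$ is a $p$-Right null sequence in $Y^{\ast}$, then $\Vert T^{\ast}(y_{n}^{\ast})\Vert=\Vert i^{\ast}\hat{T}^{\ast}(y_{n}^{\ast})\Vert\leq\Vert\hat{T}^{\ast}(y_{n}^{\ast})\Vert\to 0$, so $T^{\ast}$ is Dunford-Pettis $p$-convergent as well. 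Equivalently, in the language of Proposition \ref{p5}(i), this is just the observation that $T(B_{0})\subseteq\hat{T}(B)$ and that a subset of a $p$-Right$^{\ast}$ set is again a $p$-Right$^{\ast}$ set.

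For the reverse implication I would argue by contradiction, mirroring the proof of Theorem \ref{t9} but keeping the $p$-Right null sequence on the dual side fixed throughout. Assume $T^{\ast}$ is Dunford-Pettis $p$-convergent while $\hat{T}^{\ast}$ is not. Then there is a $p$-Right null sequence $(y_{n}^{\ast})_{n}$ in $Y^{\ast}$, which we may take inside $B_{Y^{\ast}}$, and a constant $\delta>0$ with $\Vert\hat{T}^{\ast}(y_{n}^{\ast})\Vert\geq\delta$ for all $n$ (after passing to a subsequence). Since $\Vert\hat{T}^{\ast}(y_{n}^{\ast})\Vert=\sup_{f\in B}\vert y_{n}^{\ast}(\hat{T}(f))\vert$, for each $n$ I can pick $f_{n}\in B$ with $\vert y_{n}^{\ast}(\hat{T}(f_{n}))\vert>\delta/2$.

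Now I would run the Lusin/extension machinery. Because $T$ is strongly bounded, $m$ admits a control measure, so by Lusin's theorem there is a compact set $\Omega_{0}\subseteq\Omega$ with $\tilde{m}(\Omega\setminus\Omega_{0})<\delta/8$ on which every $g_{n}:=f_{n}\vert_{\Omega_{0}}$ is continuous. Setting $H=[g_{n}]\subseteq C(\Omega_{0},X)$ and invoking the isometric extension operator $S\colon H\to C(\Omega,X)$ of \cite[Theorem 1]{bc}, the functions $h_{n}:=S(g_{n})$ satisfy $h_{n}\vert_{\Omega_{0}}=g_{n}=f_{n}\vert_{\Omega_{0}}$ and $\Vert h_{n}\Vert_{\infty}\leq 1$, so $h_{n}\in B_{0}$. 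Since $h_{n}$ and $f_{n}$ agree on $\Omega_{0}$, their integrals over $\Omega_{0}$ coincide, and writing $T(h_{n})=\hat{T}(h_{n})=\int_{\Omega}h_{n}\,dm$ together with the semivariation estimate $\vert y_{n}^{\ast}(\int_{\Omega\setminus\Omega_{0}}(h_{n}-f_{n})\,dm)\vert\leq 2\,\tilde{m}(\Omega\setminus\Omega_{0})<\delta/4$, I obtain $\vert y_{n}^{\ast}(T(h_{n}))\vert\geq\vert y_{n}^{\ast}(\hat{T}(f_{n}))\vert-\delta/4>\delta/4$ for all $n$. Hence $\Vert T^{\ast}(y_{n}^{\ast})\Vert\geq\vert y_{n}^{\ast}(T(h_{n}))\vert>\delta/4$, contradicting that $T^{\ast}$ is Dunford-Pettis $p$-convergent along the $p$-Right null sequence $(y_{n}^{\ast})_{n}$.

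The step I expect to be the crux is the transfer from $B(\Omega,X)$ to $C(\Omega,X)$: producing a single compact $\Omega_{0}$ of small co-semivariation on which all of the (merely bounded measurable) functions $f_{n}$ simultaneously become continuous, and then controlling the difference $y_{n}^{\ast}(\hat{T}(f_{n}))-y_{n}^{\ast}(T(h_{n}))$ solely through the semivariation of $m$ off $\Omega_{0}$. The conceptual gain over Theorem \ref{t9} is structural rather than computational: here the $p$-Right null sequence $(y_{n}^{\ast})_{n}$ is never altered by the approximation, so it remains an admissible test sequence for $T^{\ast}$, while the approximating continuous functions $h_{n}$ only need to lie in $B_{0}$ and need not be $p$-Right null. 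Everything else, namely the bound $\Vert\int_{A}\phi\,dm\Vert\leq\Vert\phi\Vert_{\infty}\,\tilde{m}(A)$ and the relevant properties of the extension operator $S$, is standard and already invoked in the cited references.
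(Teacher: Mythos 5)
Your argument is correct and is essentially the paper's own proof: the paper likewise reduces the statement via Proposition \ref{p5}(i) to showing $T(B_{0})$ is a $p$-Right$^{\ast}$ set if and only if $\hat{T}(B)$ is, handles one direction by the trivial containment $T(B_{0})\subseteq\hat{T}(B)$, and proves the other by the same contradiction using Lusin's theorem and the isometric extension operator of Theorem 1 in \cite{bc}, exactly as you do. Your phrasing in terms of $\Vert\hat{T}^{\ast}(y_{n}^{\ast})\Vert$ rather than $p$-Right$^{\ast}$ sets, and your explicit semivariation estimate for $y_{n}^{\ast}(\hat{T}(f_{n}))-y_{n}^{\ast}(T(h_{n}))$, only make more precise the step the paper compresses into ``by using the same argument as in the proof of Theorem \ref{t9}.''
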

\begin{proof}
By Proposition \ref{p5} (i), it is enough to show that $ T(B_{0}) $ is a $ p $-Right$ ^{\ast} $ set if and only if $ \hat{T}(B) $ is a $ p $-Right$ ^{\ast} $ set.\ Suppose that  $ T(B_{0}) $ is a $ p $-Right$ ^{\ast} $ set and
$ \hat{T}(B) $ is not a $ p $-Right$ ^{\ast} $ set.\ Therefore there exists $ p $-Right null sequence $ (y^{\ast}_{n}) $ in $ Y^{\ast} $ and   a sequence $ (f_{n}) $ in $ B $ such that $ \vert \langle y^{\ast}_{n},\hat{T}(f_{n})   \rangle \vert=1 $ for each $ n\in \mathbb{N}. $\ By using
the same argument as in the proof of Theorem \ref{t9}, we obtain a sequence $ (h_{n}) _{n}$ in $ B_{0} $ such that $ \vert\langle y^{\ast}_{n}, T(h_{n})\rangle\vert\geq \frac{1}{2} ,$ which is a contradiction, since $ T(B_{0}) $ is a $ p $-Right$ ^{\ast} $ set.\ Similarly, if $ \hat{T}(B) $ is a $ p $-Right$ ^{\ast} $ set, then $ T(B_{0}) $ is a $ p $-Right$ ^{\ast} $ set.
\end{proof}
\begin{cor}\label{c7}
Suppose that  $ T : C(\Omega,X) \rightarrow Y $
is a strongly bounded operator.\ If $ T^{\ast} $
is Dunford-Pettis $ p $-convergent, then for each $ A\in\Sigma, $ the operator $ m(A)^{\ast} :Y^{\ast}\rightarrow X^{\ast}$
 is Dunford-Pettis $ p $-convergent.\
\end{cor}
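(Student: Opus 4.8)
The plan is to translate the desired Dunford-Pettis $p$-convergence of $m(A)^{\ast}$ into a statement about point-evaluation sets via Proposition \ref{p5}(i), and then to recognize those sets as subsets of $\hat{T}(B)$. Concretely, since $T$ is strongly bounded its representing measure takes values in $L(X,Y)$, so for each fixed $A\in\Sigma$ we have $m(A)\in L(X,Y)$ and hence $m(A)^{\ast}:Y^{\ast}\rightarrow X^{\ast}$. By Proposition \ref{p5}(i), $m(A)^{\ast}$ is Dunford-Pettis $p$-convergent if and only if $m(A)(B_{X})$ is a $p$-Right$^{\ast}$ subset of $Y$. Thus it suffices to prove that $m(A)(B_{X})$ is a $p$-Right$^{\ast}$ set.

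The bridge to the extension is the identity $m(A)(x)=\hat{T}(\chi_{A}x)$, where $\chi_{A}x$ denotes the $X$-valued simple function equal to $x$ on $A$ and to $0$ off $A$. First I would record that $\chi_{A}x$ is a bounded $\Sigma$-measurable function with finite (hence separable) range, so $\chi_{A}x\in B(\Omega,X)$, and that $\Vert \chi_{A}x\Vert_{\infty}=\Vert x\Vert$, whence $\chi_{A}x\in B$ whenever $x\in B_{X}$. Evaluating $\hat{T}$ on this simple function against the representing measure gives $\hat{T}(\chi_{A}x)=\int_{\Omega}\chi_{A}x\,dm=m(A)(x)$. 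Consequently
$$ m(A)(B_{X})=\{\hat{T}(\chi_{A}x):x\in B_{X}\}\subseteq \hat{T}(B). $$

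Next I would bring in the hypothesis. Since $T^{\ast}$ is Dunford-Pettis $p$-convergent, Theorem \ref{t10} yields that $\hat{T}^{\ast}$ is Dunford-Pettis $p$-convergent as well, and Proposition \ref{p5}(i) applied to $\hat{T}$ then shows that $\hat{T}(B)$ is a $p$-Right$^{\ast}$ subset of $Y$. Finally, the class of $p$-Right$^{\ast}$ sets is hereditary: if $K'\subseteq K$ and $K$ is $p$-Right$^{\ast}$, then for every $p$-Right null sequence $(y^{\ast}_{n})_{n}$ in $Y^{\ast}$ one has $\sup_{y\in K'}\vert y^{\ast}_{n}(y)\vert\leq \sup_{y\in K}\vert y^{\ast}_{n}(y)\vert\rightarrow 0$, directly from Definition \ref{d2}. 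Applying this with $K=\hat{T}(B)$ and $K'=m(A)(B_{X})$ shows that $m(A)(B_{X})$ is $p$-Right$^{\ast}$, and Proposition \ref{p5}(i) then delivers that $m(A)^{\ast}$ is Dunford-Pettis $p$-convergent.

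I expect the only genuinely delicate point to be the justification of the identity $m(A)(x)=\hat{T}(\chi_{A}x)$ together with the membership $\chi_{A}x\in B$: everything afterwards is a formal chaining of Theorem \ref{t10} and Proposition \ref{p5}(i) with the trivial monotonicity of the $p$-Right$^{\ast}$ condition. This identity is exactly the compatibility of the extension $\hat{T}$ with the representing measure on simple functions, which is built into the construction of $\hat{T}$ from $T$, so once it is stated explicitly the argument closes immediately.
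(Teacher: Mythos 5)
Your proof is correct and is exactly the intended derivation: the paper states this corollary without proof immediately after Theorem \ref{t10} and Proposition \ref{p5}, and the standard argument is precisely your chain $m(A)(B_{X})=\{\hat{T}(\chi_{A}x):x\in B_{X}\}\subseteq\hat{T}(B)$, together with the hereditariness of $p$-Right$^{\ast}$ sets. The one point you flag as delicate, the identity $\hat{T}(\chi_{A}x)=m(A)(x)$ with $m(A)\in L(X,Y)$ for strongly bounded $T$, is indeed the compatibility of the extension with the representing measure built into the construction, so the argument is complete.
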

The proof of the following result is similar to the proof of Theorem \ref{t10}, and will be omitted.
\begin{thm}\label{t11}
Suppose that $ T : C(\Omega,X) \rightarrow  Y^{\ast} $ is a
strongly bounded operator.\ Then $ T^{\ast}\vert_{Y} $
is Dunford-Pettis $ p $-convergent if and only if $ \hat{T}^{\ast}\vert_{Y} $ is  Dunford-Pettis $ p$-convergent.
\end{thm}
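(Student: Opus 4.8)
The plan is to mimic the proof of Theorem \ref{t10}, with the $Y^{\ast\ast}$--$Y^{\ast}$ duality used there replaced by the $Y^{\ast}$--$Y$ duality. First I would record the variant of Proposition \ref{p5}(i) needed here. For $z\in Y$ and the operator $T:C(\Omega,X)\to Y^{\ast}$ one has $T^{\ast}(z)\in C(\Omega,X)^{\ast}$ with $\langle T^{\ast}(z),f\rangle=(T(f))(z)$, whence $\Vert T^{\ast}(z)\Vert=\sup_{f\in B_{0}}\vert (T(f))(z)\vert$. Therefore $T^{\ast}\vert_{Y}$ is Dunford-Pettis $p$-convergent if and only if $\sup_{f\in B_{0}}\vert (T(f))(y_{n})\vert\to 0$ for every $p$-Right null sequence $(y_{n})_{n}$ in $Y$; in the same way, $\hat{T}^{\ast}\vert_{Y}$ is Dunford-Pettis $p$-convergent if and only if $\sup_{f\in B}\vert (\hat{T}(f))(y_{n})\vert\to 0$ for every such sequence. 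Calling a bounded set $K\subseteq Y^{\ast}$ \emph{admissible} if every $p$-Right null sequence in $Y$ tends to $0$ uniformly on $K$, it thus suffices to prove that $T(B_{0})$ is admissible if and only if $\hat{T}(B)$ is admissible.

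One implication is immediate: since $\hat{T}$ extends $T$ and $B_{0}\subseteq B$, we have $T(B_{0})\subseteq\hat{T}(B)$, so admissibility of $\hat{T}(B)$ passes to its subset $T(B_{0})$.

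For the converse I would argue by contradiction, exactly as in Theorems \ref{t9} and \ref{t10}. Suppose $T(B_{0})$ is admissible but $\hat{T}(B)$ is not. Then there are a $p$-Right null sequence $(y_{n})_{n}$ in $Y$, which we may take in $B_{Y}$, and a sequence $(f_{n})_{n}$ in $B$ with $\vert (\hat{T}(f_{n}))(y_{n})\vert=1$ for all $n$. Using a control measure for $m$ together with Lusin's theorem, I would pick a compact $\Omega_{0}\subseteq\Omega$ with $\tilde{m}(\Omega-\Omega_{0})<\frac{1}{4}$ such that each $g_{n}:=f_{n}\vert_{\Omega_{0}}$ is continuous, set $H=[g_{n}]\subseteq C(\Omega_{0},X)$, and invoke \cite[Theorem 1]{bc} to obtain an isometric extension operator $S:H\to C(\Omega,X)$ for which $h_{n}:=S(g_{n})\in B_{0}$ and the pairings $(\hat{T}(f_{n}))(y_{n})$ and $(T(h_{n}))(y_{n})$ differ by less than $\frac{1}{2}$. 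Then $\vert (T(h_{n}))(y_{n})\vert\geq\frac{1}{2}$ with $h_{n}\in B_{0}$, contradicting admissibility of $T(B_{0})$. The reverse reduction is symmetric.

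The step that genuinely requires care, and which I expect to be the main obstacle, is the transfer estimate drawn from \cite[Theorem 1]{bc}: one must verify that the isometric extension $S$ is compatible with the representing measure $m$ so that replacing $f_{n}$ by $h_{n}$ changes the value of $\hat{T}$ (evaluated at $y_{n}$) by at most the semivariation of $m$ off $\Omega_{0}$, which is below $\frac{1}{2}$ once $\tilde{m}(\Omega-\Omega_{0})$ is small and $\Vert y_{n}\Vert\leq 1$. Since $y_{n}$ enters only as a contractive functional on $Y^{\ast}$, this estimate is insensitive to whether the evaluating functional is taken from $Y$ or from $Y^{\ast\ast}$, so the argument of Theorem \ref{t9} carries over essentially verbatim.
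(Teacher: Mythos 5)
Your proposal is correct and follows exactly the route the paper intends: the paper omits the proof of Theorem \ref{t11}, saying only that it is similar to that of Theorem \ref{t10}, and your argument is precisely that adaptation --- replacing the $p$-Right$^{\ast}$ criterion of Proposition \ref{p5}(i) by the corresponding $Y$--$Y^{\ast}$ pairing (your ``admissible'' sets) and reusing the Lusin/extension-operator estimate from Theorem \ref{t9}, with the evaluating functionals now drawn from $Y$ rather than $Y^{\ast\ast}$. The only cosmetic point is that your closing remark about a ``symmetric reverse reduction'' is redundant, since that direction is the trivial inclusion $T(B_{0})\subseteq\hat{T}(B)$ you had already handled.
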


\begin{cor}\label{c8}
Suppose that  $  T:C(\Omega,X) \rightarrow  Y^{\ast}$
is a strongly bounded operator.\ If $ T^{\ast}\vert_{Y} $ is  Dunford-Pettis $ p $-convergent, then for each $ A\in \Sigma, $ $ m(A)^{\ast}\vert_{Y} $ is   Dunford-Pettis $ p $-convergent.
\end{cor}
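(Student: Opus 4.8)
The plan is to reduce the statement to Theorem \ref{t11} exactly as Corollary \ref{c7} is obtained from Theorem \ref{t10}, and then to transfer Dunford-Pettis $p$-convergence from $\hat{T}^{\ast}\vert_Y$ down to each $m(A)^{\ast}\vert_Y$ by evaluating the extension on elementary simple functions. First I would invoke Theorem \ref{t11}: since $T^{\ast}\vert_Y$ is Dunford-Pettis $p$-convergent and $T$ is strongly bounded, the restriction $\hat{T}^{\ast}\vert_Y : Y\rightarrow B(\Omega,X)^{\ast}$ is Dunford-Pettis $p$-convergent as well. This is the only place where strong boundedness is used, and it is what makes the bounded measurable extension $\hat{T}$ available.

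Next I would record the link between the representing measure and the extension. For each $A\in\Sigma$ and $x\in X$ the function $\chi_A\otimes x$ (equal to $x$ on $A$ and $0$ off $A$) belongs to $B(\Omega,X)$, satisfies $\Vert \chi_A\otimes x\Vert\leq\Vert x\Vert$, and $\hat{T}(\chi_A\otimes x)=m(A)(x)$ in $Y^{\ast}$. In particular $\chi_A\otimes x\in B$ whenever $x\in B_X$. Unwinding the adjoints with $y\in Y$ viewed canonically in $Y^{\ast\ast}$, one gets $\langle x, m(A)^{\ast}(y)\rangle = \langle m(A)(x),y\rangle = \langle \hat{T}(\chi_A\otimes x),y\rangle = \langle \chi_A\otimes x,\hat{T}^{\ast}(y)\rangle$.

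Now fix $A\in\Sigma$ and let $(y_n)_n$ be an arbitrary $p$-Right null sequence in $Y$. Taking the supremum over $x\in B_X$ in the identity above and using $\chi_A\otimes x\in B$, I would estimate
\[
\Vert m(A)^{\ast}(y_n)\Vert_{X^{\ast}} = \sup_{x\in B_X}\bigl|\langle \chi_A\otimes x,\hat{T}^{\ast}(y_n)\rangle\bigr| \leq \Vert \hat{T}^{\ast}(y_n)\Vert_{B(\Omega,X)^{\ast}}.
\]
Since $\hat{T}^{\ast}\vert_Y$ is Dunford-Pettis $p$-convergent, the right-hand side tends to $0$, whence $\Vert m(A)^{\ast}(y_n)\Vert\rightarrow 0$ and $m(A)^{\ast}\vert_Y$ is Dunford-Pettis $p$-convergent.

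The norm estimate and the passage through the adjoints are routine. The main point to verify carefully is the identity $\hat{T}(\chi_A\otimes x)=m(A)(x)$ together with the duality bookkeeping for the restriction to $Y$: here $m(A):X\rightarrow Y^{\ast}$, so $m(A)^{\ast}$ a priori acts on $Y^{\ast\ast}$, and one must check that restricting to $Y\hookrightarrow Y^{\ast\ast}$ is compatible with restricting $\hat{T}^{\ast}$ to $Y$, which is precisely what the displayed chain of equalities confirms.
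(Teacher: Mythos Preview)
Your proposal is correct and follows precisely the route the paper intends: the paper states Corollary~\ref{c8} without proof as an immediate consequence of Theorem~\ref{t11}, in exact parallel to Corollary~\ref{c7} following from Theorem~\ref{t10}, and your argument simply spells out that deduction via the identity $\hat{T}(\chi_A\otimes x)=m(A)(x)$ and the resulting norm bound $\Vert m(A)^{\ast}(y)\Vert\leq\Vert \hat{T}^{\ast}(y)\Vert$.
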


Let us recall from \cite{g22}, that  if $ T : C(\Omega,X) \rightarrow Y $ is a bounded linear operator, $ \overline{\Omega} $ is a metrizable compact space, and $ \pi:\Omega\rightarrow  \overline{\Omega} $ a continuous map which is onto, we will call $ \overline{\Omega} $ a quotient of $ \Omega. $\ The map $\overline{\pi}:C( \overline{\Omega})\rightarrow C(\Omega) $ given by $ \overline{\pi}( \overline{f})= \overline{f}\circ \pi $ defines an isometric embedding of $ C( \overline{\Omega}) $ into $ C(\Omega). $\ Let $  \overline{T} : C(\overline{\Omega},X) \rightarrow Y $ be the operator defined by $ \overline{T}(\overline{f})=T(\overline{f}\circ \pi) $ where $ \overline{f}\in C(\overline{\Omega},X) $ and $ \pi:\Omega\rightarrow \overline{\Omega} $ is the canonical mapping.\\
By using the same
argument as in the proof of {\rm (\cite[Lemma 18]{g22})}, we obtain the following result.
\begin{thm}\label{t12}
$ \rm{(i)} $ Suppose that  $ T : C(\Omega,X) \rightarrow  Y $ is a bounded linear operator.\ Then,
$ T^{\ast} $ is  Dunford-Pettis $ p $-convergent if and only if for each metrizable quotient $ \overline{\Omega}$
of $ \Omega, $ $ \overline{T}^{\ast} $
is Dunford-Pettis $ p $-convergent.\\
$ \rm{(ii)} $ Suppose that $ T : C(\Omega,X) \rightarrow Y^{\ast} $ is a bounded linear operator.\ Then,
$ T^{\ast}\vert_{Y} $ is  Dunford-Pettis $ p $-convergent if and only if for each metrizable quotient $ \overline{\Omega} $ of $\Omega, $
$ \overline{T}^{\ast}\vert_{Y} $ is Dunford-Pettis $ p $-convergent.
\end{thm}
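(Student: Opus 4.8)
The plan is to reduce everything to the factorization $\overline{T}=T\circ\Phi$, where $\Phi:C(\overline{\Omega},X)\rightarrow C(\Omega,X)$ is the isometric embedding $\Phi(\overline{f})=\overline{f}\circ\pi$ induced by the canonical surjection $\pi:\Omega\rightarrow\overline{\Omega}$. Since $\overline{T}(\overline{f})=T(\overline{f}\circ\pi)=T(\Phi(\overline{f}))$, taking adjoints yields $\overline{T}^{\ast}=\Phi^{\ast}\circ T^{\ast}$. The forward implication of (i) is then immediate: if $T^{\ast}$ is Dunford-Pettis $p$-convergent and $(y^{\ast}_{n})_{n}$ is $p$-Right null in $Y^{\ast}$, then $\Vert T^{\ast}(y^{\ast}_{n})\Vert\rightarrow0$, and boundedness of $\Phi^{\ast}$ gives $\Vert\overline{T}^{\ast}(y^{\ast}_{n})\Vert\leq\Vert\Phi^{\ast}\Vert\,\Vert T^{\ast}(y^{\ast}_{n})\Vert\rightarrow0$, so $\overline{T}^{\ast}$ is Dunford-Pettis $p$-convergent for every metrizable quotient.

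For the converse I would argue by contradiction. Assuming $\overline{T}^{\ast}$ is Dunford-Pettis $p$-convergent for every metrizable quotient while $T^{\ast}$ is not, I would first extract a $p$-Right null sequence $(y^{\ast}_{n})_{n}$ in $Y^{\ast}$ with $\Vert T^{\ast}(y^{\ast}_{n})\Vert\geq\varepsilon$ for some $\varepsilon>0$, and then, using $\Vert T^{\ast}(y^{\ast}_{n})\Vert=\sup_{f\in B_{0}}|\langle y^{\ast}_{n},T(f)\rangle|$, choose $(f_{n})_{n}$ in $B_{0}$ with $|\langle y^{\ast}_{n},T(f_{n})\rangle|\geq\varepsilon/2$. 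The decisive step is to manufacture a single metrizable quotient $\overline{\Omega}$ through which all of the $f_{n}$ factor. Since each $f_{n}\in C(\Omega,X)$ has compact, hence separable, range, the set $\bigcup_{n}f_{n}(\Omega)$ is separable; choosing a countable total family $(x^{\ast}_{k})_{k}$ in $X^{\ast}$ over its closed linear span, the relation $s\sim t\Longleftrightarrow x^{\ast}_{k}(f_{n}(s))=x^{\ast}_{k}(f_{n}(t))$ for all $k,n$ coincides with $f_{n}(s)=f_{n}(t)$ for all $n$. The countably many continuous scalar functions $x^{\ast}_{k}\circ f_{n}$ separate points of $\overline{\Omega}:=\Omega/\!\sim$, so this quotient is a metrizable compact space, $\pi:\Omega\rightarrow\overline{\Omega}$ is continuous and onto, and each $f_{n}$ factors as $f_{n}=\overline{f}_{n}\circ\pi$ with $\overline{f}_{n}\in C(\overline{\Omega},X)$, $\Vert\overline{f}_{n}\Vert=\Vert f_{n}\Vert\leq1$.

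With this quotient in hand the contradiction closes quickly: $\overline{T}(\overline{f}_{n})=T(\overline{f}_{n}\circ\pi)=T(f_{n})$, so $\Vert\overline{T}^{\ast}(y^{\ast}_{n})\Vert\geq|\langle y^{\ast}_{n},\overline{T}(\overline{f}_{n})\rangle|=|\langle y^{\ast}_{n},T(f_{n})\rangle|\geq\varepsilon/2$, whereas $(y^{\ast}_{n})_{n}$ being $p$-Right null forces $\Vert\overline{T}^{\ast}(y^{\ast}_{n})\Vert\rightarrow0$ by hypothesis. I expect the construction of the metrizable quotient to be the only real obstacle, and it is precisely the point where the argument of \cite[Lemma 18]{g22} is invoked; everything else is bookkeeping with the factorization.

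Finally, part (ii) follows by running the identical scheme for the restriction $T^{\ast}|_{Y}:Y\rightarrow C(\Omega,X)^{\ast}$, using $\langle T^{\ast}(y),f\rangle=\langle T(f),y\rangle$ and $\overline{T}^{\ast}|_{Y}=\Phi^{\ast}\circ(T^{\ast}|_{Y})$, with a $p$-Right null sequence $(y_{n})_{n}$ in $Y$ replacing $(y^{\ast}_{n})_{n}$ in $Y^{\ast}$ and the supremum $\Vert T^{\ast}(y_{n})\Vert=\sup_{f\in B_{0}}|\langle T(f),y_{n}\rangle|$ driving the same contradiction.
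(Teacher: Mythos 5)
Your proposal is correct and follows essentially the same route as the paper: the forward direction via the factorization $\overline{T}=T\circ\Phi$ (so $\overline{T}^{\ast}=\Phi^{\ast}\circ T^{\ast}$), and the converse by passing a witnessing sequence $(f_{n})_{n}\subseteq B_{0}$ through a single metrizable quotient and invoking the identification of Dunford--Pettis $p$-convergence of $T^{\ast}$ with $T(B_{0})$ being a $p$-Right$^{\ast}$ set (Proposition \ref{p5}(i)). The only difference is cosmetic: you construct the metrizable quotient explicitly (separable ranges plus a countable total family of functionals), whereas the paper outsources that step to a citation of Batt--Berg.
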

\begin{proof}
We will only consider the part (i).\ The proof of (ii) is similar.\
Suppose that  adjoint $ T : C(\Omega,X) \rightarrow Y  $ is Dunford-Pettis $ p $-convergent
 and
$\overline{\Omega} $
is a metrizable quotient space of $ \Omega. $\ Then $ \overline{T}^{\ast} $ is Dunford-Pettis $ p $-convergent.\\
Conversely, let $ T : C(\Omega,X) \rightarrow Y $ be a bounded linear operator and let $ (f_{n})_{n} $ be a sequence
in $ B_{0}. $\ It is known (see \cite{bb}) that there exists a  metrizable quotient space $ \overline{\Omega} $ of $  \Omega$ and a sequence $ (\overline{f_{n}}) $ in unit ball $ C(\overline{\Omega} ,X) $ such
that  $ (\overline{f_{n}})(\pi (t) )=f_{n}(t)$ for all $ t \in \Omega $ and $ n\in \mathbb{N} .$\ Define $\overline{T}  :C(\overline{\Omega},X) \rightarrow Y$ by $ \overline{T}(\overline{f}) =T(\overline{f}\circ \pi),$
 where$  $ is the $ \pi:\Omega\rightarrow \overline{\Omega} $ canonical mapping.\ By the hypothesis,
$ \overline{T}^{\ast} $ is Dunford-Pettis $  p$-convergent.\ Therefore $\lbrace T(f_{n}):n\in \mathbb{N}\rbrace= \lbrace \overline{T}(\overline{f_{n}}):n\in \mathbb{N}\rbrace$  is a $ p $-Right$ ^{\ast} $  set.\ Hence, the part $ \rm{(i)} $ of Proposition \ref{p5} implies that
$ T^{\ast} $ is  Dunford-Pettis $p$-convergent.
\end{proof}
The proofs of the following Lemma is similar to that of {\rm(\cite[Lemma 21]{g22})} and
 and will be omitted.
\begin{lem}\label{l4}
 Let $  K$ be a bounded subset of $ X. $\ If for each $ \varepsilon >0 $ there is a $ p $-Right$ ^{\ast} $ subset $ K_{\varepsilon} $ of $ X $ such that   $ K\subseteq K_{\varepsilon}+\varepsilon B_{X} ,$  then $ K $ is a $ p $-Right$ ^{\ast} $ set.\
\end{lem}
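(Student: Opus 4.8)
The plan is to argue directly from Definition~\ref{d2} by a routine approximation, so that the $p$-Right$^{\ast}$-ness of $K$ is inherited from the approximating sets $K_{\varepsilon}$. First I would fix an arbitrary $p$-Right null sequence $(x^{\ast}_{n})_{n}$ in $X^{\ast}$; since such a sequence is in particular weakly $p$-summable, it is norm bounded, and I would set $M:=\sup_{n}\Vert x^{\ast}_{n}\Vert<\infty$. This uniform bound, which does not depend on $\varepsilon$, is the single ingredient that makes the whole scheme work, and recording it is the only place where the structure of $p$-Right null sequences is actually used.

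Next, given $\varepsilon>0$, I would invoke the hypothesis to choose a $p$-Right$^{\ast}$ set $K_{\varepsilon}$ with $K\subseteq K_{\varepsilon}+\varepsilon B_{X}$. For each $x\in K$ the containment yields a splitting $x=y+z$ with $y\in K_{\varepsilon}$ and $\Vert z\Vert\le\varepsilon$, and then
\[
\vert x^{\ast}_{n}(x)\vert\le\vert x^{\ast}_{n}(y)\vert+\vert x^{\ast}_{n}(z)\vert\le\sup_{u\in K_{\varepsilon}}\vert x^{\ast}_{n}(u)\vert+M\varepsilon .
\]
Taking the supremum over $x\in K$ (the right-hand bound being independent of the chosen splitting) gives
\[
\sup_{x\in K}\vert x^{\ast}_{n}(x)\vert\le\sup_{u\in K_{\varepsilon}}\vert x^{\ast}_{n}(u)\vert+M\varepsilon\qquad(n\in\mathbb{N}).
\]

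Finally I would let $n\to\infty$. Because $K_{\varepsilon}$ is a $p$-Right$^{\ast}$ set and $(x^{\ast}_{n})_{n}$ is $p$-Right null, Definition~\ref{d2} forces $\lim_{n}\sup_{u\in K_{\varepsilon}}\vert x^{\ast}_{n}(u)\vert=0$, so that $\limsup_{n}\sup_{x\in K}\vert x^{\ast}_{n}(x)\vert\le M\varepsilon$. Since $\varepsilon>0$ was arbitrary, this limit superior equals $0$, hence $\lim_{n}\sup_{x\in K}\vert x^{\ast}_{n}(x)\vert=0$ and $K$ is a $p$-Right$^{\ast}$ set. There is no serious obstacle in this argument; the only point deserving a line of justification is the boundedness $M<\infty$, which follows from the standard fact that a weakly $p$-summable sequence in a Banach space is norm bounded (via a uniform boundedness argument applied to the maps $\varphi\mapsto(\varphi(x^{\ast}_{n}))_{n}\in\ell_{p}$).
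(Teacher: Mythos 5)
Your proof is correct and is exactly the standard $\varepsilon$-approximation argument that the paper has in mind (it omits the proof, noting only that it is similar to Lemma 21 of the cited Ghenciu paper, which proceeds in precisely this way). The one point you flag --- norm boundedness of a $p$-Right null sequence --- is indeed the only detail needing justification, and your appeal to weak $p$-summability (hence weak nullity and uniform boundedness) settles it.
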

A topological space $ \Omega $ is called dispersed (or scattered), if every nonempty
closed subset of $  \Omega$ has an isolated point \cite{s}.\ By using the same
argument as in the proof of {\rm (\cite[Theorem 22]{g22})}, we obtain the following
result.
\begin{thm}\label{t13}
$ \rm{(i)} $ Suppose that $  \Omega$ is a dispersed compact Hausdorff space
and $ T : C(\Omega,X) \rightarrow Y $ is a strongly bounded operator.\
Then $ T^{\ast}  $ is Dunford-Pettis $ p $-convergent if and only if $ m(A) ^{\ast}: Y^{\ast}\rightarrow X^{\ast}$
is Dunford-Pettis $ p $-convergent, for each $ A \in \Sigma. $\\
$ \rm{(ii)} $  Let $  \Omega$ be a dispersed compact Hausdorff space
and $  T : C(\Omega,X)\rightarrow Y^{\ast} $ be a strongly bounded operator.\
Then $T^{\ast}\vert_{Y} : Y \rightarrow C(\Omega,X)^{\ast}  $ is Dunford-Pettis $ p $-convergent if and only if
$ m(A)^{\ast}\vert_{Y} : Y \rightarrow X^{\ast} $ is Dunford-Pettis $ p $-convergent, for each $A \in \Sigma,  $
\end{thm}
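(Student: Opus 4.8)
The plan is to treat part (i) as two implications and then obtain part (ii) by the same scheme with the evident replacements. For the forward implication, suppose $T^{\ast}$ is Dunford-Pettis $p$-convergent. Then Corollary \ref{c7} immediately yields that $m(A)^{\ast}:Y^{\ast}\to X^{\ast}$ is Dunford-Pettis $p$-convergent for every $A\in\Sigma$, so this direction requires no further work and does not use dispersedness.

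The substance lies in the converse, so assume $m(A)^{\ast}$ is Dunford-Pettis $p$-convergent for every $A\in\Sigma$. By Theorem \ref{t10} it suffices to prove that $\hat T^{\ast}$ is Dunford-Pettis $p$-convergent, and by Proposition \ref{p5}(i) applied to $\hat T$ this amounts to showing that $\hat T(B)$ is a $p$-Right$^{\ast}$ subset of $Y$. I would obtain this through Lemma \ref{l4}: it is enough to approximate $\hat T(B)$ within $\varepsilon$ by a $p$-Right$^{\ast}$ set for each $\varepsilon>0$. This is where dispersedness enters. Since $T$ is strongly bounded, $m$ admits a control measure $\lambda$; because $\Omega$ is scattered, $\lambda$ is purely atomic, so its mass concentrates on a sequence of isolated points, and for a given $\varepsilon>0$ I can select finitely many atoms $t_{1},\dots,t_{N}$ so that, with $E=\Omega\setminus\{t_{1},\dots,t_{N}\}$, the semivariation obeys $\tilde m(E)<\varepsilon$. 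For every $f\in B$ this gives the decomposition
\begin{equation*}
\hat T(f)=\int_{\Omega}f\,dm=\sum_{k=1}^{N} m(\{t_{k}\})\,f(t_{k})+\int_{E}f\,dm,
\end{equation*}
where $\bigl\|\int_{E}f\,dm\bigr\|\le \tilde m(E)<\varepsilon$, so that
\begin{equation*}
\hat T(B)\subseteq K_{\varepsilon}+\varepsilon B_{Y},\qquad K_{\varepsilon}:=\sum_{k=1}^{N} m(\{t_{k}\})(B_{X}).
\end{equation*}
Each $m(\{t_{k}\})(B_{X})$ is a $p$-Right$^{\ast}$ set by Proposition \ref{p5}(i), since $m(\{t_{k}\})^{\ast}$ is Dunford-Pettis $p$-convergent by hypothesis, and a finite Minkowski sum of $p$-Right$^{\ast}$ sets is again $p$-Right$^{\ast}$ by the subadditivity of $K\mapsto\sup_{x\in K}|x_{n}^{\ast}(x)|$ in Definition \ref{d2}. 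Hence $K_{\varepsilon}$ is $p$-Right$^{\ast}$, and Lemma \ref{l4} yields that $\hat T(B)$ is $p$-Right$^{\ast}$, as required.

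The main obstacle is the approximation step: one must justify rigorously that on a scattered $\Omega$ a strongly bounded representing measure is, up to arbitrarily small semivariation, carried by finitely many isolated atoms, and that the singleton integrals collapse to $m(\{t_{k}\})f(t_{k})$. This rests on the purely atomic character of $C(\Omega)^{\ast}$ for scattered $\Omega$ together with the strong boundedness of $m$; everything else is bookkeeping with the semivariation. Part (ii) is proved in the same way, replacing $T^{\ast}$, $m(A)^{\ast}$, Corollary \ref{c7}, and Proposition \ref{p5}(i) by $T^{\ast}|_{Y}$, $m(A)^{\ast}|_{Y}$, Corollary \ref{c8}, and the corresponding $p$-Right$^{\ast}$ criterion for operators into $Y^{\ast}$, while the atomic decomposition and the Minkowski-sum argument remain unchanged.
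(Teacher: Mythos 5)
Your proposal is correct, but the converse direction takes a genuinely different route from the paper's. The paper does not argue directly on $\Omega$: it first uses Theorem \ref{t12} together with Lemmas 18 and 20 of \cite{g22} (and the fact that a quotient of a dispersed space is dispersed) to reduce to the case where $\Omega$ is metrizable, hence \emph{countable} by \cite[8.5.5]{s}; it then enumerates $\Omega=\{t_i\}$, sets $A_k=\{t_j: j>k\}$, and uses strong boundedness to get $\tilde m(A_k)<\varepsilon$, after which the decomposition $T(f_n)=\sum_{i=1}^k m(\{t_i\})(f_n(t_i))+\int_{A_k}f_n\,dm$ and Lemma \ref{l4} finish exactly as in your argument. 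You instead stay on the original $\Omega$ and invoke the purely atomic character of regular Borel measures on scattered compacta to concentrate the control measure on finitely many points. That fact is true (it is the classical Rudin--Pe\l czy\'nski--Semadeni theorem, available in \cite{s}), so your approach works, and it is arguably more direct since it avoids the quotient machinery; but it is precisely the step you flag as the ``main obstacle,'' and it needs a citation or proof, whereas the paper's reduction replaces it by the elementary observation that a dispersed metrizable compactum is countable. Two small corrections to your write-up: the atoms of the control measure need not be \emph{isolated} points of $\Omega$ (only points of positive mass), though nothing in your argument uses isolatedness; and you work with $\hat T$ on $B$ via Theorem \ref{t10}, which is fine but unnecessary --- the paper applies Proposition \ref{p5}(i) to $T$ on $B_0$ directly. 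Your forward direction and the treatment of part (ii) coincide with the paper's.
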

\begin{proof}
We will only consider the part (i).\ The proof of (ii) is similar.\
Suppose  that $  T : C(\Omega,X) \rightarrow Y $ is strongly bounded such that  $ T^{\ast} $ is  Dunford-Pettis $ p $-convergent, then for each $ A \in \Sigma, $ Corollary \ref{c7}  implies that $ m(A)^{\ast}:Y^{\ast}\rightarrow X^{\ast} $ is a Dunford-Pettis $ p $-convergent operator. \\
Conversely, suppose that $ m(A)^{\ast}:Y^{\ast}\rightarrow X^{\ast} $ is Dunford-Pettis $ p $-convergent for each $ A \in \Sigma. $\
Since a quotient space of a dispersed
space is dispersed (see {\rm (\cite[8. 5. 3]{s})}), by using Lemmas 18 and 20 in \cite{g22}, we can suppose without loss of generality that
$ \Omega $ is metrizable and so, $ \Omega $ is countable (see {\rm (\cite[8. 5. 5]{s})}).\ Suppose that  $ \Omega=\lbrace t_{i}:i\in \mathbb{N}    \rbrace $  and $ (f_{n}) $ is a sequence in $ B_{0}. $\ For each $  i\in \mathbb{N}$ the set $ \lbrace f_{n}(t_{i}):i\in \mathbb{N}    \rbrace  $  is bounded in $ X. $\
Therefore Proposition \ref{p5} (i) implies that, the set $ H_{i} =\lbrace m(\lbrace  t_{i}\rbrace) (f_{n}(t_{i})): n\in \mathbb{N}    \rbrace$
is a $  p$-Right$ ^{\ast} $ set, for each $ i\in \mathbb{N}. $\ Now, let  $ A_{i} =\lbrace t_{j}:j>i   \rbrace, i\in \mathbb{N}$ and $ \varepsilon>0. $\ Since $  m$ is strongly bounded,
there is a $ k\in \mathbb{N} $ such that $  \title{m}(A_{k})<\varepsilon.$\ Hence, for each $ n\in \mathbb{N}, $
\begin{center}
$ T(f_{n}) =\int_{\Omega} f_{n}dm=\displaystyle\sum_{i=1}^{k}m(\lbrace t_{i} \rbrace)(f_{n}(t_{i}))+\int_{A_{k}} f_{n}dm.$
\end{center}
Further, $\Vert  \int_{A_{k}} f_{n}dm  \Vert < \tilde{m}(A_{k})<\varepsilon.$\ Therefore
\begin{center}
$  T(f_{n}) \in H_{1} +H_{2} +\cdot\cdot\cdot+H_{k}+\varepsilon B_{Y}.$
\end{center}
Since $ H_{1} +H_{2} +\cdot\cdot\cdot+H_{k} $ is a $ p $-Right$ ^{\ast} $ set, by Lemma \ref{l4},
 $  \lbrace  T(f_{n}) :n \in \mathbb{N} \rbrace$ is
a $ p $-Right$ ^{\ast} $ set.\ Thus, an application of Proposition \ref{p5} (i) gives that
 $ T^{\ast} $ is Dunford-Pettis $ p $-convergent.
\end{proof}

\end{document}